\documentclass[12pt,a4paper]{amsart}
\usepackage[dvipsnames]{xcolor}
\usepackage{amsfonts}
\usepackage{amscd}
\usepackage[latin1]{inputenc}
\usepackage{t1enc}
\usepackage[mathscr]{eucal}
\usepackage{indentfirst}
\usepackage{graphicx}
\usepackage{graphics}
\usepackage{pict2e}
\usepackage{epic}
\numberwithin{equation}{section}
\usepackage[margin=2.6cm]{geometry}
\usepackage{epstopdf} 
\usepackage{amsmath}
\allowdisplaybreaks
\usepackage{amssymb}
\usepackage{amsthm}
\usepackage{bbm}
\usepackage{tikz}
\usepackage{paralist}
\usepackage[onehalfspacing]{setspace}
\usepackage{amsfonts}
\usepackage{color}
\usepackage{nicefrac}
\usepackage{mathrsfs}
\usepackage{multirow}
\usepackage{mathtools}
\usepackage{tikz-cd}
\usepackage{stix}
\newcounter{dummy}
\usepackage{hyperref}
\usepackage{enumitem}
\makeatletter
\newcommand\myitem[1][]{\item[#1]\refstepcounter{dummy}\def\@currentlabel{#1}}
\makeatother
\newtheorem{thm}{Theorem}
\numberwithin{thm}{section}
\newtheorem{lemma}[thm]{Lemma}

\newtheorem{definition}[thm]{Definition}
\newtheorem{coro}[thm]{Corollary}

\newtheorem*{thm*}{Theorem}
\newtheorem*{prop*}{Proposition}
\numberwithin{equation}{section}
\theoremstyle{remark}
\newtheorem{remark}[thm]{Remark}

\newcommand{\A}{\mathscr{A}}

\newcommand{\R}{\mathbb{R}}
\newcommand{\N}{\mathbb{N}}

\newcommand{\T}{\mathbb{T}}

\newcommand{\LL}{\mathcal{L}} 

\newcommand{\Ncal}{\mathscr{N}}

\renewcommand{\phi}{\varphi}

\newcommand{\dx}{\,\textup{d}x}
\newcommand{\dt}{\,\textup{d}t}
\newcommand{\dy}{\,\textup{d}y}
\newcommand{\dz}{\,\textup{d}z}
\newcommand{\dlambda}{\,\textup{d}\lambda}

\newcommand{\dH}{\,\textup{d}\mathcal{H}}

\everymath{\displaystyle}

\DeclareMathOperator{\Lin}{Lin}

\DeclareMathOperator{\divergence}{div}

\renewcommand{\phi}{\varphi}

\DeclareMathOperator{\id}{id}








\newcommand{\M}{\mathscr{M}}

\definecolor{Gump}{rgb}{0,0.6,0.4}
\definecolor{Hanks}{rgb}{0.7,0.3,0.1}

\begin{document}
\title[Higher integrability under additional sign constraints]{Higher integrability of solutions to elliptic equations under additional sign constraints}
\author[Schiffer]{Schiffer, Stefan}
\address{parcIT GmbH, Erfstra{\ss}e 15, 50672 Cologne, Germany} 
\email{stefan.schiffer.math@gmail.com}
\subjclass[2020]{35J15}
\keywords{Lipschitz truncation, higher integrability, differential inclusions} 
\begin{abstract}
 Solutions to elliptic equations often exhibit higher regularity properties such as \emph{higher integrability}. That is, for instance, a solution $u$ to a system that a priori only satisfies $ u \in W^{1,r}$ is more regular and even in the Sobolev space $W^{1,s}$ for some $s>r$. Under additional constraints of the sign of specific terms such as $(\partial_i u)$ this improvement of regularity can be sharpened further.

 In this work, we consider two examples of such higher integrability results: First, we show a version of M\"uller's result on the higher integrability of the determinant for maps $u \in W^{1,n} $ such that $\mathrm{det}(\nabla u) \geq 0$ (or $ \mathrm{det}_-(\nabla u) \in L \log L$). Second, we consider (very weak) solutions to the $p$-Laplace equation that satisfy sign constraints for their partial derivatives, i.e. that $(\partial_i u)_- $ is of higher integrability than $(\partial_i u)_+$. To prove our results, we use the method of Lipschitz truncation; for the second example we further develop a variation of this technique, the \emph{asymmetric} Lipschitz truncation.
\end{abstract}
\maketitle

\section{Introduction}
In this work we consider higher integrability properties of Sobolev function that are solutions to certain \emph{elliptic} equations or, in more generality, solutions to differential inclusions $\nabla u \in K$. As an example, consider the linear elliptic equation 
\[
\divergence( A(x) \cdot \nabla u(x)) = 0
\]
where $A \colon \Omega \to \Lin(\R^n;\R^n)$ is positive definite almost everywhere. The natural space to consider weak solutions to this system is the Sobolev space $W^{1,2} = \{ u \in L^2 \colon \nabla u \in L^2\}$ i.e. 
\begin{equation} \label{intro:weaksolution}
u \in W^{1,2} \quad \text{and} \int_{\Omega} \nabla \varphi(x) \cdot (A(x) \nabla u(x)) =0 \quad \forall \varphi \in W^{1,2}_0(\Omega).
\end{equation}
Regularity theory for such an elliptic equation nowadays is quite classical, in particular when the coefficients $A$ are smooth or at least H\"older continuous. But even for \emph{non-continuous} coefficients $A$ one can show that solutions to the in the energy space $W^{1,2}$ are locally in some space of better integrability, i.e. $W^{1,2+\varepsilon}$. A \emph{very weak} solution to this system is a solution that is not in this energy space, e.g. in $W^{1,r}$ for $r<2$, where we need to upgrade the regularity of the test function to $W^{1,\infty}_0$ (or $C_c^{\infty}$). One can now show that for sufficiently irregular $A$ (that are still elliptic):
\begin{itemize}
    \item If $r$ is sufficiently close to $2$, any very weak solution in $W^{1,r}$ is already a weak solution, i.e. in $W^{1,2}$ (e.g. \cite{Meyers}).
    \item If $r$ is below a certain threshold, then there might exist a weak solution $u \in W^{1,r}$ which is not in $W^{1,2}$ (e.g. \cite{AFS}).
\end{itemize}
In this work, we add an additional sign constraint of involved quantities. In above example that might look like 
\[
(\partial_{i} u)_+ \in L^r \quad \text{and} \quad (\partial_{i} u)_- \in L^2,
\]
i.e. the negative part of the partial derivative exhibits far better integrability than the positive part. The aim of this work is to show that for two examples, such 'one-sided better integrability' is already enough to infer it for the whole integrand.
\subsection{Higher integrability of the determinant}
The first example builds on a well-known result by \textsc{M\"uller} \cite{Mueller,Mueller2}. 
Recall that a map $f \colon \Omega \subset \R^n \to \R^n$ is called $K$-quasiconformal or $K$-quasiregular if for almost every $x \in \Omega$ the norm of the derivative is controlled by its determinant, i.e.
\begin{equation} \label{quasiconformal}
K^{-1} \vert \nabla u (x) \vert^n \leq \det( \nabla u).
\end{equation}
As this condition is homogeneous of order $n$, the natural space to consider those mappings is $W^{1,n}$. As it was already shown by \textsc{Gehring} (\cite{Gehring,GR66} , see also \cite{Astala}), maps satisfying \eqref{quasiconformal} exhibit higher regularity properties, i.e. both
\begin{equation} \label{regularity1}
    u \in W^{1,n} \text{ and satisfies \eqref{quasiconformal}} \quad \Longrightarrow \quad u \in W^{1,n+\varepsilon}
\end{equation}
and 
\begin{equation} \label{regularity2}
    u \in W^{1,n-\varepsilon} \text{ and satisfies \eqref{quasiconformal}} \quad \Longrightarrow \quad u \in W^{1,n}.
\end{equation}
Of course we may directly combine \eqref{regularity1} and \eqref{regularity2}, but it is also sensible to split those results: The first one tells us something about the regularity of maps that are already in the right \emph{energy space} (i.e. where $\det(\nabla u) \in L^1$), whereas the second improves solutions that are \emph{not} in this space of \emph{classical} solutions. It is also worth pointing out that, in space dimension 2, such maps  are very much connected to problem \eqref{intro:weaksolution} in the following way (cf. \cite{AFS}): We have 
\begin{equation*}
     \divergence \sigma = 0 \quad \text{for} \quad \sigma= A(x) \cdot \nabla u(x)
\end{equation*}
In two dimensions, that means that $\sigma = \nabla^\perp w$ for some function $w$. Then $\bar{u}=(u,w)$ is a quasiregular map (with $K$ depending on the properties of the operator $A$).

A small part of aforementioned improved integrability property survives even if the parameter $K$ diverges to infinity, i.e. we are left with
\[
0 \leq \det(\nabla u).
\]
Indeed, if $u \in W^{1,n}_{loc}$ and $\det$ is positive \textsc{M\"uller} showed that $\det(\nabla u) \log(1+\vert \nabla u\vert) \in L^1_{loc}$ (also cf. \cite{Cianchi,Greco2,Greco1,IV} for refinements). The limiting result of \eqref{regularity2} was then also proven by \textsc{Iwaniec \&  Sbordone}, i.e. if $\vert \nabla u \vert^n \log(1+ \vert \nabla u \vert)^{-1} \in L^1$ and $\det$ is (locally) positive, then $\det(\nabla u) \in L^1$ (cf. \cite{IS2}).

Recently, \textsc{Rai\c{t}\u{a}} \cite{Raita25} isolated  showed that quasiconcavity of the determinant (it is even quasiconcave \emph{and} quasiconvex, i.e. a Null-Lagrangian) is responsible for this improved regularity. Recall that $F$ is quasiconcave if
\[
F(v) \geq \int_{\T_n} F(v + \nabla \psi(x)) \dx 
\]
for any periodic function $\psi \in C^{\infty}(\T_n,\R^n)$. One then can show that 
if $F$ is $p$-homogeneous and quasiconcave then
\[
u \in W^{1,p} \quad \text{and} \quad F(\nabla u) \geq 0 \quad \Longrightarrow \quad F(\nabla u) \log(1+F(\nabla u)) \in L^1.
\]

The purpose of considering this first example in the present note is three-fold:
\begin{enumerate} [label=(\roman*)]
    \item We prove the analaogue of the Iwaniec-Sbordone result for quasiconcave functions (cf. Theorem \ref{thm:main2} below).
    \item In the setting of \cite{Raita25} we prove some minor refinements, i.e. that if $\vert \nabla u \vert^p \log(1+ \vert \nabla u )^{\alpha} \in L^1$ and $F_-$ is higher integrable, then $\vert \nabla u \vert^p \log(1+ F(\nabla u) )^{\alpha+1}$.
    \item As a method of proof we show that the method of \emph{Lipschitz truncation}, that can for instance be used to obtain \eqref{regularity1} and \eqref{regularity2}, also works in this limiting case $K \to \infty$. The key idea of this method is, instead of directly considering $u$, to mainly study a modified version of $u$ (which we call $u_{\lambda}$) that is already in $W^{1,\infty}$.
\end{enumerate}
In more detail, we assume that 
\begin{itemize}
    \item $F$ is a quasiconcave function with $p$-growth, i.e.
        \[
        \vert F(v) \vert \leq C(1+\vert v\vert^p);
        \]
    \item $F(0) =0$;
    \item $u \in L^1(\R^n,\R^n)$ has compact support.
\end{itemize}

Then we obtain the following results:

\begin{thm} \label{thm:main1}
    Suppose that $\alpha \geq -1$ and that
    \begin{enumerate} [label=(\roman*)]
        \item $\vert \nabla u \vert^p \log(1+\vert \nabla u \vert)^{\alpha} \in L^1$;
        \item $F_-(\nabla u) \cdot \log(1+ \vert F_-(\nabla u) \vert)^{\alpha+1} \in L^1$.
    \end{enumerate}
    Then $F_+(\nabla u) \cdot \log(1+ F_+(u))^{\alpha+1} \in L^1$.
\end{thm}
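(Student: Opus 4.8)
The plan is to exploit the quasiconcavity of $F$ through the Lipschitz truncation $u_\lambda$ of $u$, together with a layer-cake decomposition on the super-level sets of $|\nabla u|$, and to run a Gronwall/iteration argument that "absorbs" the positive part $F_+(\nabla u)$ against the already-controlled quantities $(i)$ and $(ii)$. First I would recall the key estimates for the Lipschitz truncation at level $\lambda$: the set $\{u \neq u_\lambda\}$ is contained in (a dilate of) the bad set $\{M(\nabla u) > \lambda\}$ up to a Hardy--Littlewood-type null set, $\|\nabla u_\lambda\|_\infty \lesssim \lambda$, and crucially that $u_\lambda$ is an admissible periodic-type competitor (after rescaling to the torus or using the compact support hypothesis $u \in L^1$, $F(0)=0$) so that quasiconcavity gives
\[
0 \leq \int_{\R^n} F(\nabla u_\lambda)\,\dx = \int_{\{u = u_\lambda\}} F(\nabla u)\,\dx + \int_{\{u \neq u_\lambda\}} F(\nabla u_\lambda)\,\dx.
\]
Rearranging and splitting $F = F_+ - F_-$ on the good set yields
\[
\int_{\{u = u_\lambda\}} F_+(\nabla u)\,\dx \leq \int_{\{u = u_\lambda\}} F_-(\nabla u)\,\dx + \int_{\{u \neq u_\lambda\}} |F(\nabla u_\lambda)|\,\dx \lesssim \int F_-(\nabla u)\,\dx + \lambda^p \, |\{M(\nabla u) > \lambda\}|,
\]
using the $p$-growth of $F$ and $|\nabla u_\lambda| \lesssim \lambda$.

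Next I would turn this family of estimates (one for each $\lambda$) into the desired $L\log^{\alpha+1}L$ bound by integrating against the measure $\lambda^{-1}\log(1+\lambda)^{\alpha}\,\dlambda$ over $\lambda \in (\lambda_0,\infty)$ — this is the standard trick that converts an $L^p$ weak-type control into an $L^p (\log L)^{\alpha+1}$ strong bound. On the left-hand side, Fubini turns $\int_{\lambda_0}^\infty \big(\int_{\{|\nabla u| \leq c\lambda\}} F_+(\nabla u)\big)\lambda^{-1}\log(1+\lambda)^\alpha\,\dlambda$ into essentially $\int F_+(\nabla u)\log(1+|\nabla u|)^{\alpha+1}\,\dx$ (up to constants and lower-order $\log$ powers); here one uses that on $\{u=u_\lambda\}$ one has $|\nabla u| \lesssim \lambda$, so the contributions line up. On the right-hand side, the first term contributes $\int F_-(\nabla u)\log(1+|\nabla u|)^{\alpha+1}$, which must be dominated by hypothesis $(ii)$ — this needs a short argument comparing $\log(1+|\nabla u|)$ with $\log(1+F_-(\nabla u))$ on the set where $F_-$ is large, combined with $(i)$ to handle the set where $|\nabla u|$ is large but $F_-$ is small. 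The second, "error", term $\int_{\lambda_0}^\infty \lambda^{p-1}\log(1+\lambda)^\alpha |\{M(\nabla u) > \lambda\}|\,\dlambda$ is, by the maximal function bound and another Fubini, comparable to $\int |\nabla u|^p \log(1+|\nabla u|)^\alpha\,\dx$, which is exactly hypothesis $(i)$ — modulo the weak-$(1,1)$ versus strong bound for $M$, which is why one integrates from a fixed $\lambda_0 > 0$ rather than from $0$ and controls the piece $\lambda < \lambda_0$ trivially by $p$-growth and $(i)$.

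Finally, having bounded $\int F_+(\nabla u)\log(1+|\nabla u|)^{\alpha+1}\,\dx$, I would upgrade $\log(1+|\nabla u|)$ to $\log(1+F_+(\nabla u))$ in the conclusion: since $F_+(\nabla u) \leq |F(\nabla u)| \leq C(1+|\nabla u|^p)$, we have $\log(1+F_+(\nabla u)) \leq p\log(1+|\nabla u|) + C'$, so the target integral is controlled by what we already have (plus an $L^1$ term that follows from $F_+(\nabla u) \in L^1$, itself a consequence of Raiță's result or of the $\alpha = -1$ endpoint already contained in the family of estimates above). The main obstacle I anticipate is the bookkeeping around the maximal-function error term: making the passage from the weak-type bound $|\{M(\nabla u) > \lambda\}| \lesssim \lambda^{-p}\int_{\{|\nabla u| > c\lambda\}}|\nabla u|^p$ to a genuine $L^p\log^\alpha L$ estimate rigorous — in particular checking that the logarithmic weights are compatible and that the constants do not blow up as $\lambda_0 \to 0$ or as one iterates — and making sure the Lipschitz truncation $u_\lambda$ is genuinely an admissible test object for the quasiconcavity inequality in the non-periodic, compactly supported setting (this is where $F(0)=0$ and the compact support of $u$ are used, perhaps via a covering/rescaling to reduce to the torus). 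A secondary subtlety is the borderline case $\alpha = -1$, where $\log(1+\cdot)^{\alpha+1} = \log(1+\cdot)^0 = 1$ and the statement degenerates to an $L^1$ bound; the argument should be organized so that this case is not special but simply the base of the scale.
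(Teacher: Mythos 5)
There is a genuine gap in your proposal, and it is precisely the issue that forces the paper to split Theorems \ref{thm:main1} and \ref{thm:main2} into two different proofs. You apply quasiconcavity directly to the truncation, obtaining
\[
0 \geq \int_{\R^n} F(\nabla u_\lambda)\,\dx = \int_{\{u=u_\lambda\}} F(\nabla u)\,\dx + \int_{\{u\neq u_\lambda\}} F(\nabla u_\lambda)\,\dx
\]
(the sign in your write-up should be $\geq$, not $\leq$; $F$ quasiconcave with $F(0)=0$ gives the \emph{upper} bound $0$). This produces an estimate on $\int_{\{u=u_\lambda\}} F_+(\nabla u)$, i.e. an integral over the \emph{good} set, where $\{u=u_\lambda\}\subset\{|\nabla u|\leq C\lambda\}$. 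When you now integrate against $\lambda^{-1}\log(1+\lambda)^\alpha\,\dlambda$ and apply Fubini, the inner $\lambda$-integral is of the form $\int_{|\nabla u|/c}^{\infty}\lambda^{-1}\log(1+\lambda)^\alpha\,\dlambda$, i.e.\ from $|\nabla u|$ out to infinity. For $\alpha\geq -1$ this integral \emph{diverges}: your left-hand side and your first right-hand side term (the $F_-$ term, which has no small-$\lambda$ support constraint at all) both become $+\infty$, and no information is obtained. The identity $\int \lambda^{-1}\log(1+\lambda)^\alpha\dlambda \sim \log^{\alpha+1}$ that you invoke has the limits of integration the wrong way around in this regime: one needs the inner integral to run from $1$ up to a quantity comparable to $|\nabla u|$, which requires the pointwise inequality to involve the \emph{super}-level set $\{\M\nabla u>\lambda\}$, not the sublevel set. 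Your argument is, in effect, the correct argument for $\alpha<-1$ (Theorem \ref{thm:main2}), where $\lambda^{-1}\log(1+\lambda)^\alpha$ is integrable at infinity and the Fubini integral $\int_{|\nabla u|}^\infty$ does converge.

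The paper's proof of Theorem \ref{thm:main1} for $\alpha\geq 0$ instead applies quasiconcavity to the \emph{difference} $u-u_\lambda$ and uses local Lipschitz continuity of $F$ (a consequence of quasiconcavity plus $p$-growth), yielding
\[
\int_{\{\M\nabla u>\lambda\}} F(\nabla u)\,\dx \;\leq\; C\lambda\int_{\{\M\nabla u>\lambda\}} \bigl(|\nabla u|^{p-1}+\lambda^{p-1}\bigr)\,\dx,
\]
an estimate localised to the bad set. After Fubini the inner integral is $\int_1^{\M\nabla u}(1+\lambda)^{-1}\log(1+\lambda)^\alpha\dlambda \sim \tfrac{1}{\alpha+1}\log(1+\M\nabla u)^{\alpha+1}$, which is finite and produces exactly the desired weight. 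There is a further subtlety you flag but do not resolve: for $-1\leq\alpha<0$ one only knows $\nabla u\in L^p(\log L)^\alpha \supsetneq L^p$, so $u-u_\lambda$ is not an admissible $W^{1,p}$ test function; the paper handles this by comparing two truncation levels $\lambda$ and $\lambda^2$ and working on the band $\{\lambda<\M\nabla u\leq\lambda^2\}$, which keeps the Fubini integral two-sided and finite. Without replacing your good-set decomposition by one localised to the bad set (or a band), and without this double-truncation device for $\alpha<0$, the proposal does not close.
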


\begin{thm} \label{thm:main2}
    Suppose that $\alpha <-1$ and that
    \begin{enumerate} [label=(\roman*)]
        \item $\vert \nabla u \vert^p \log(1+\vert \nabla u \vert)^{\alpha} \in L^1$;
        \item $F_-(\nabla u) \cdot \log(1+ \vert F_-(\nabla u) \vert)^{\alpha+1} \in L^1$.
    \end{enumerate}
    Then $F_+(\nabla u) \cdot \log(1+ F_+(\nabla u))^{\alpha+1} \in L^1$.
\end{thm}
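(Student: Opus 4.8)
The plan is to prove Theorem~\ref{thm:main2} by the Lipschitz truncation method: replace $u$ by its truncation $u_\lambda$ at height $\lambda$, apply quasiconcavity to the now Lipschitz and compactly supported map $u_\lambda$ to obtain a scale-$\lambda$ inequality, and then sum these over the dyadic scales $\lambda=2^j$ against the weight $w_j=j^\alpha$, which is summable \emph{precisely because $\alpha<-1$}. I would first record the elementary facts used throughout: a quasiconcave $F$ with $p$-growth and $F(0)=0$ is locally Lipschitz with $\vert DF(v)\vert\le C(1+\vert v\vert^{p-1})$, hence $0\le F_\pm(v)\le C(\vert v\vert+\vert v\vert^p)$ and $\{F_+(\nabla u)>0\}\subseteq\spt u$ up to a null set; moreover hypothesis~(i) forces $\vert\{\vert\nabla u\vert>\lambda\}\vert\le C\lambda^{-p}\log(1+\lambda)^{-\alpha}$ for large $\lambda$, and therefore $(1+\vert\nabla u\vert)^\eta\in L^1(\spt u)$ for every $\eta\in(0,p)$.

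For $\lambda\ge1$ let $u_\lambda\in W^{1,\infty}(\R^n;\R^n)$ be a Lipschitz truncation of $u$: it has compact support, $\Vert\nabla u_\lambda\Vert_{L^\infty}\le C\lambda$, $\nabla u_\lambda=\nabla u$ a.e.\ off the set $E_\lambda:=\{u\ne u_\lambda\}$, and $\vert E_\lambda\vert\le C\vert\{M\nabla u>c\lambda\}\vert$ with $M$ the Hardy--Littlewood maximal operator. Mollifying $u_\lambda$ and applying the defining inequality of quasiconcavity on a large periodicity cell (together with $F(0)=0$) gives $\int_{\R^n}F(\nabla u_\lambda)\dx\le0$. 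Splitting this integral over $E_\lambda$ and its complement, using $\nabla u_\lambda=\nabla u$ on the complement and $\vert F(\nabla u_\lambda)\vert\le C\lambda^p$ on $E_\lambda$, and using that $F_+(\nabla u)\le C\lambda^p$ on $\{\vert\nabla u\vert\le\lambda\}\cap E_\lambda$, I obtain (up to adjusting constants) the scale-$\lambda$ estimate
\[
\int_{\{\vert\nabla u\vert\le\lambda\}}F_+(\nabla u)\dx\ \le\ \int_{\{\vert\nabla u\vert\le\lambda\}}F_-(\nabla u)\dx\ +\ C\lambda^p\,\vert\{M\nabla u>c\lambda\}\vert .
\]

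Applying this with $\lambda=2^j$, multiplying by $w_j=j^\alpha$ (and setting $w_j=0$ below a fixed threshold) and summing in $j$, by Tonelli the left side becomes $\int_{\R^n}F_+(\nabla u)\,\Theta(\vert\nabla u\vert)\dx$ with $\Theta(m):=\sum_{2^j\ge m}w_j$. Since $\alpha<-1$ the series converges and $\Theta(m)$ is comparable to $\log(1+m)^{\alpha+1}$ for $m$ large (and bounded between positive constants for bounded $m$), so the left side dominates a constant times $\int_{\R^n}F_+(\nabla u)\log(1+\vert\nabla u\vert)^{\alpha+1}\dx$. The first term on the right is $\int_{\R^n}F_-(\nabla u)\,\Theta(\vert\nabla u\vert)\dx$, and since $F_-(\nabla u)\le C(1+\vert\nabla u\vert)^p$ implies $\log(1+\vert\nabla u\vert)^{\alpha+1}\lesssim\log(1+F_-(\nabla u))^{\alpha+1}$ (here $\alpha+1<0$), it is controlled by hypothesis~(ii). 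The decisive term is the ``bad set'' sum $\sum_j w_j2^{jp}\vert\{M\nabla u>c2^j\}\vert$: writing it as $\int_{\R^n}\big(\sum_{c2^j<M\nabla u}j^\alpha2^{jp}\big)\dx$ and using that the inner geometric sum is comparable to its top term, it is $\lesssim\int_{\R^n}(M\nabla u)^p\log(1+M\nabla u)^\alpha\dx$, which is finite by hypothesis~(i) and boundedness of $M$ on the Zygmund space $L^p(\log L)^\alpha$ ($p>1$). This yields $\int_{\R^n}F_+(\nabla u)\log(1+\vert\nabla u\vert)^{\alpha+1}\dx<\infty$.

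The remaining step --- which I expect to be the main obstacle --- is to replace the weight $\log(1+\vert\nabla u\vert)^{\alpha+1}$ by $\log(1+F_+(\nabla u))^{\alpha+1}$, the difficulty being that $F_+(\nabla u)$ can be pointwise much smaller than $\vert\nabla u\vert^p$, so these two weights are genuinely incomparable (the former being far larger where $F_+$ is small, which is exactly the regime where quasiconcavity, not the pointwise growth bound, must be exploited). I would fix $\eta\in(0,p)$ and split $\{F_+(\nabla u)>0\}$ into $A=\{F_+(\nabla u)\ge(1+\vert\nabla u\vert)^\eta\}$ and $B=\{0<F_+(\nabla u)<(1+\vert\nabla u\vert)^\eta\}$; on $A$ the $p$-growth bound forces $\log(1+F_+(\nabla u))\asymp\log(1+\vert\nabla u\vert)$, so the contribution of $A$ is finite by the previous display, while on $B$ one has $F_+(\nabla u)\le(1+\vert\nabla u\vert)^\eta\in L^1(\spt u)$, so (using that $\log(1+F_+(\nabla u))^{\alpha+1}$ is bounded where $F_+(\nabla u)$ is, and handling the small values of $F_+(\nabla u)$ directly) the contribution of $B$ is finite as well; adding gives the assertion. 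Note finally that the choice $w_j=j^\alpha$ is essentially forced: summability of $w_j$ --- equivalently $\alpha<-1$ --- is what makes the output weight $\Theta$ finite and of order $\log(1+\,\cdot\,)^{\alpha+1}$, while the geometric factor $2^{jp}$ is what keeps the bad-set sum at the level $L^p(\log L)^\alpha$ matched to hypothesis~(i), rather than at the worse level $L^p(\log L)^{\alpha+1}$.
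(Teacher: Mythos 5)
Your argument is correct and rests on the same engine as the paper's: Lipschitz truncation at height $\lambda$, the quasiconcavity inequality $\int F(\nabla u_\lambda)\dx\le 0$ split over $\{u\ne u_\lambda\}$ and its complement, and an integration over scales against a weight of total mass controlled by $\log^{\alpha+1}$, which is exactly where $\alpha<-1$ enters. The genuine difference is in how the scale-$\lambda$ inequality is organised. You keep the level sets in terms of $\vert\nabla u\vert$ (or $\M\nabla u$), so after summing you obtain $\int F_+(\nabla u)\log(1+\vert\nabla u\vert)^{\alpha+1}\dx<\infty$ and must then pay for the conversion to the weight $\log(1+F_+(\nabla u))^{\alpha+1}$ via your $A$/$B$ splitting --- a step you rightly identify as the delicate one, since for $\alpha+1<0$ the target weight is \emph{larger} precisely where $F_+$ is small relative to $\vert\nabla u\vert^p$. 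The paper sidesteps this entirely by writing the left-hand side of the scale-$\lambda$ estimate as $\int_{\{F_+(\nabla u)\le C\lambda^p\}}F_+(\nabla u)\dx$ (using $\{\M\nabla u\le\lambda\}\subset\{F_+(\nabla u)\le C\lambda^p\}$ and paying only an extra $C\lambda^p\,\LL^n(\{\M\nabla u>\lambda\})$), so that Fubini directly produces $\int F_+\,\log(1+F_+)^{\alpha+1}$ on the left; the analogous choice $\{F_-(\nabla u)\le C\lambda^p\}$ on the right matches hypothesis (ii) without any comparison of weights. A second, cosmetic difference: you control the bad-set sum via boundedness of $\M$ on $L^p(\log L)^{\alpha}$, whereas the paper first applies Zhang's estimate $\lambda\,\LL^n(\{\M\nabla u>\lambda\})\le c\int_{\{\vert\nabla u\vert>\lambda/2\}}\vert\nabla u\vert\dx$ and only then integrates in $\lambda$; both are legitimate (the paper itself uses the maximal-function route in Theorem \ref{thm:main1}). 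One caution: your closing remark that the small values of $F_+(\nabla u)$ on $B$ can be "handled directly" is only true for $\alpha\ge-2$, since $t\log(1+t)^{\alpha+1}\sim t^{\alpha+2}$ blows up as $t\to0^+$ when $\alpha<-2$; but the paper's own conclusion is likewise confined to the set where $F_+(\nabla u)\ge C\lambda_0^p$, so this is a limitation of the statement's formulation rather than a defect of your argument relative to the paper's.
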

There are two main reasons why the result is exactly split in the fashion above. First, the proofs of the cases $\alpha \geq -1$ and $\alpha <-1$ are different, which can be (at least partially) attributed to the integrability of $(1+\lambda)^{-1} \log(1+\lambda)^{\alpha}$ on $(1,\infty)$. Second, in the first result we can guarantee that $F_+(\nabla u) \in L^1$ (i.e. $F$ is integrable) whereas in the second result this cannot be achieved. We also note the following:
\begin{enumerate} [label=(\roman*)]
\item As mentioned before, in the scope of the determinant, much more general results have been proven (cf. for instance Table 1 in \cite{Greco1} for an overview) and in general the proofs for quasiconcave functions should work similarly. The present work therefore rather provides a different method in showing those result (which surely can be generalised to more general Orlicz functions).
    \item As long as an $\A$-free truncation is available, similar to \cite{Raita25}, the results can easily be generalised from the case of quasiconcavity to $\A$-quasiconcavity.
    \item Local results (i.e. replacing the assertion that $\vert \nabla u \vert^p \log(1+\vert \nabla u \vert)^{\alpha} \in L^1$ by $\in L^1_{loc}$) can be usually be recovered by considering
    \[
    \tilde{u} = \varphi u, \quad \nabla \tilde{u} = \varphi \nabla u + \nabla \varphi \otimes u.
    \]
    Observe that we then need to verify the second assertion on the integrability of the negative part. Due to Sobolev embedding and local Lipschitz continuity of $F$, the second term is of high integrability and thus only the first is critical. For this term we could for instance assume that $F$ is $p$-homogeneous.
    \item In principle one can also generalise the results to functions with Orlicz growth, i.e. $\vert F(v) \vert \leq C(1+ \Psi(Cv))$ for suitable $\Psi$. We do not consider this case, however: If $\Psi \sim \vert v \vert^p \log(1+ \vert v \vert)^{\alpha}$, proofs are rather similar. Furthermore, under our assumptions we cannot explore any situation with any critical growth like $\Psi \sim \vert v \vert \log(1+\vert v \vert)$: Even in the case where $\Psi \leq C \vert v \vert^{2-\varepsilon}$ (i.e. in the case of subquadratic growth) the growth condition already dictates that $F_+(\vert v \vert)$ can only grow linearly and we directly have much higher integrability of $F_+$. Similarly, if $\Psi$ has super-polynomial growth, $F_+$ also instantly must have slower growth than $F_-$.
    \end{enumerate}
\subsection{Higher integrability of certain elliptic systems} \label{intro:part2}
Let $1<p< \infty$ and consider solutions $u \in W^{1,q}(\R^n;\R)$ to the system
\[
\divergence( A(x,\nabla u))  = \divergence f.
\]
The operator $A$ may or may not be linear, we restrict ourselves to A of the form
\begin{equation} \label{eq:form:A}
    A(x,\nabla u(x)) = a_1 (\vert \nabla u \vert) (a_2(x) \nabla u(x))
\end{equation}
where 
\begin{enumerate} [label=(A\arabic*)]
    \item \label{item:A1} $a_1 \colon (0,\infty) \to (0,\infty)$ is such that
        \[
            c t^{p-2}\leq a_1(t) \leq C t^{p-2}, t>1 \quad \text{ and } a_1(t) \leq C t^{-1+\varepsilon}, 0<t<1;
        \]
    \item \label{item:A2} The measurable map $a_2 \colon \Omega \to \Lin(\R^n;\R^n)$ satisfies:
        \begin{itemize}
            \item $a_2$ maps into the space of positive definite diagonal matrices;
            \item $a_2(x) \geq \id$ and there exists a constant $\nu \geq 1$ such that $a_2(x) \leq \nu \id$ (in the sense of symmetric matrices).
        \end{itemize}
\end{enumerate}
Note that for $a_1(\cdot) =1 $ these assumptions cover some systems of the general form \eqref{intro:weaksolution} and for $a_1(t) = t^{p-2}$ also includes systems of $p$-Laplacian type.

In the case of the $p$-Laplacian an intriguing dichotomy arises: For any $p \neq 2$ there is an $\varepsilon(p)$ such that
\begin{itemize}
    \item for $\varepsilon < \varepsilon(p)$ any solution $u \in W^{1,p-\varepsilon}$ improves to $u \in W^{1,p}$ (cf. \cite{IS,Lewis});
    \item for $\varepsilon > \varepsilon(p)$ this is not true any more \cite{CT,KMSX}.
\end{itemize}
The latter estimate is shown using \emph{convex} integration and, in particular, the method of staircase laminates developed by \textsc{Faraco} \cite{AFS,Faraco}. \textsc{Colombo \& Tione} \cite{CT} show even a little bit more: If one adds the constraint
\[
(\partial_i u)_- \in L^{\infty}.
\]
then the higher integrability property of the $p$-Laplacian extends to the full range, i.e. if
\[
(\partial_i u)_+ \in L^r,~r\geq \max \{1,p-1\} \quad \text{and} \quad (\partial_i u)_- \in L^{\infty}
\]
for a very weak solution to the $p$-Laplace equation, then $u \in W^{1,p}$. In the present work, we show a strengthening of this result. 
\begin{thm} \label{thm:main3}
    Suppose that $u \in W^{1,r}_0(\Omega;\R)$ is a solution to the equation
    \begin{equation}
            \divergence(A(x,\nabla u)) = \divergence f.
    \end{equation}
    where $A$ obeys the properties \ref{item:A1} and \ref{item:A2}, $\Omega \subset \R^n$ is a convex domain, $f \in (L^1 \cap L^{\infty})(\R^n;\R^n)$ and $\max \{1,p-1\} < r < p$. Suppose further that
    \[
    (\partial_i u)_- \in L^q \quad \text{for all } i=1,\ldots,n, \quad r<q<p.
    \]
    Then 
    \[
    \nabla u \in L^{q-\delta} \quad \text{for any } \delta >0.
    \]
\end{thm}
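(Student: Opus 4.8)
The plan is to adapt the asymmetric Lipschitz truncation method to this setting. The idea follows the philosophy already outlined in the paper: rather than working directly with the very weak solution $u \in W^{1,r}_0$, one introduces a truncated function $u_\lambda$ that agrees with $u$ outside a "bad set" $\{M(\nabla u) > \lambda\}$ (where $M$ denotes a suitable maximal function) and is Lipschitz with constant comparable to $\lambda$ on the bad set. The crucial point in the \emph{asymmetric} version is that the truncation should be performed differently for the positive and negative parts of each $\partial_i u$: since $(\partial_i u)_-$ already lives in $L^q$ with $q > r$, only the positive parts require genuine truncation, and the truncation level can be coupled to the (better) integrability of the negative part. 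Concretely, I would define the truncation so that on the region where we truncate, the modified gradient still satisfies $(\partial_i u_\lambda)_- \in L^\infty$ (or at least controlled), while $|\nabla u_\lambda| \lesssim \lambda$.

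Next, I would test the equation $\divergence(A(x,\nabla u)) = \divergence f$ with $\varphi = u - u_\lambda$ (or a localized version $\eta(u - u_\lambda)$ with a cutoff $\eta$ adapted to the convex domain $\Omega$; convexity of $\Omega$ is presumably used to ensure the Lipschitz truncation extends properly and to control boundary terms since $u \in W^{1,r}_0$). This yields an energy-type identity
\[
\int_{\{M(\nabla u) > \lambda\}} A(x,\nabla u) \cdot \nabla(u - u_\lambda) = \int_{\{M(\nabla u) > \lambda\}} f \cdot \nabla(u - u_\lambda).
\]
On the right, $f \in L^1 \cap L^\infty$ gives good control. On the left, I would use the structural assumptions \ref{item:A1}–\ref{item:A2}: the diagonal, positive-definite structure of $a_2$ with $a_2(x) \geq \id$ means $A(x,\nabla u) \cdot \nabla u \geq c a_1(|\nabla u|)|\nabla u|^2 \gtrsim |\nabla u|^p$ on the set where $|\nabla u| > 1$, and the ellipticity lets one absorb the cross terms. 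The diagonal structure is exactly what makes the asymmetric (coordinate-by-coordinate) truncation compatible with the equation, since $\partial_i u$ only couples to itself in $A(x,\nabla u) \cdot e_i$ up to the scalar factor $a_1(|\nabla u|)$.

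The heart of the argument is then a \textbf{good-$\lambda$ / layer-cake iteration}: one estimates the measure of the bad set $|\{M(\nabla u) > \lambda\}|$ in terms of the same quantity at a smaller level, using the energy identity above together with the weak-$(1,1)$ / strong-$(r/1)$ bounds on the maximal function and the higher integrability $(\partial_i u)_- \in L^q$. The asymmetry enters because the "gain" in the iteration is dictated by how fast $(\partial_i u)_-$ decays — i.e. by $q$ rather than $r$ — so one is led to a distributional inequality of the form $\lambda^q |\{|\nabla u| > \lambda\}| \lesssim$ (tail terms), which upon integrating $\int_0^\infty \lambda^{q - \delta - 1} |\{|\nabla u| > \lambda\}| \, d\lambda$ gives $\nabla u \in L^{q - \delta}$. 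The loss of $\delta$ is standard and comes from the fact that the maximal function is not bounded on $L^1$ and from balancing the iteration constants; it is the same phenomenon appearing in Gehring-type lemmas.

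The main obstacle I anticipate is \textbf{making the asymmetric truncation genuinely compatible with testing the equation while only paying in the $L^q$-norm of the negative parts}. A naive symmetric truncation would force the level $\lambda$ to be governed by $\|\nabla u\|_{L^r}$ and give back only $u \in W^{1,p-\varepsilon}$, i.e. nothing new. One must arrange that the truncated competitor $u - u_\lambda$ is admissible (zero boundary values, correct support, $W^{1,\infty}$) \emph{and} that in the energy identity the error terms involving the region where truncation occurs are controlled by $\int (\partial_i u)_-^q$ rather than by the full gradient — this is where the sign constraint $(\partial_i u)_- \in L^q$ must be fed in at precisely the right place, likely via splitting $\nabla u = \nabla u_+ - \nabla u_-$ and observing that on the truncation region the "dangerous" contribution (the part of $|\nabla u|$ that exceeds $\lambda$) can, by the coordinatewise structure and the growth condition \ref{item:A1} with $a_1(t) \leq C t^{-1+\varepsilon}$ for small $t$, be charged to $(\partial_i u)_-$. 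Handling the subcritical small-$|\nabla u|$ regime (the $a_1(t)\le Ct^{-1+\varepsilon}$ branch) and the degenerate/singular behavior at $p \neq 2$ uniformly will require care, and the case distinction $p > 2$ versus $p < 2$ may have to be treated separately, as is typical for $p$-Laplacian estimates. A secondary technical point is the local-to-global passage using the convexity of $\Omega$: one wants the Lipschitz truncation to respect the zero boundary condition, which is cleanest on convex (or at least Lipschitz) domains.
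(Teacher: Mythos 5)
Your high-level strategy is the right one and matches the paper's: asymmetric Lipschitz truncation that treats $(\partial_i u)_+$ and $(\partial_i u)_-$ differently, test the equation, exploit the diagonal structure of $a_2$, and iterate. But there are concrete gaps and one step that would fail as written.

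First, the test function. You propose testing with $\varphi = u - u_\lambda$. For very weak solutions this is problematic: $\int A(x,\nabla u)\cdot\nabla u \sim \int |\nabla u|^p$ need not be finite when $\nabla u$ is only in $L^r$, $r<p$, so the pairing $\int A(x,\nabla u)\cdot\nabla(u-u_\lambda)$ is not a priori well-defined on the bad set. The paper tests with $u_\lambda$ itself: since $A(x,\nabla u)\in L^{r/(p-1)}$ (using $r\geq p-1$) and $\nabla u_\lambda\in L^\infty$, the integral $\int A(x,\nabla u)\cdot\nabla u_\lambda$ converges, and it is split over the good set $G_\lambda$ (where $u=u_\lambda$, giving coercivity) and the bad set $B_\lambda$ (where the diagonal structure lets one drop the same-sign products and keep only cross terms $\sim \lambda^\alpha|(\nabla u)_+|^{p-1}$ and $\lambda|(\nabla u)_-|^{p-1}$).

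Second, the missing quantitative coupling. The paper does not just truncate the positive and negative parts at independent levels; it sets $\mu=\lambda^\alpha$ with $\alpha=(r+1)/(q+1)<1$, and the entire exponent bookkeeping hinges on this choice. Your proposal says the level "can be coupled to the better integrability of the negative part" but never pins down what the coupling must be, and without it the tail integrals coming from $\LL^n(\{\Ncal(\nabla u)_-\!>\!\mu\})$ and $\LL^n(\{\Ncal(\nabla u)_+\!>\!\lambda\})$ will not close.

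Third, the integration-in-$\lambda$ argument. You describe a good-$\lambda$/distributional inequality $\lambda^q|\{|\nabla u|>\lambda\}|\lesssim$ tails. The actual proof does something different: it derives a pointwise-in-$\lambda$ inequality of the form
\[
\int_{\{1\le |(\nabla u)_+|<\beta\lambda\}} |(\nabla u)_+|^p\dx \;\leq\; C + (\text{tail integrals involving } (\nabla u)_\pm \text{ at levels } \lambda,\ \lambda^\alpha),
\]
then integrates both sides against the weight $\lambda^{s-p-1}\dlambda$ for $r<s<p$, applies Fubini, and balances all exponents; the choice $\alpha=(r+1)/(q+1)$ makes the right side finite for $s=q(r+1)/(q+1)>r$. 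Only then does one iterate the lemma (boosting the positive-part exponent from $r$ to $s$ to $s'>s$, etc.), and the loss of $\delta$ comes from the strict inequality $s<q$ at each iteration step together with the fact that $\Ncal$ is not weak-$(1,1)$, not from a Gehring-type constant as you suggest.

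Finally, a minor point: the condition $a_1(t)\le Ct^{-1+\varepsilon}$ for small $t$ is there only to guarantee integrability of $A(x,\nabla u)\cdot\nabla u_\lambda$ near $|\nabla u|=0$; it plays no role in "charging" mass to the negative part.
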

Of course, it should be mentioned that (by a reflection argument) a similar statement is true for any combination of partial derivatives so that either $(\partial_i u)_+$ or $(\partial_i u)_-$ has higher integrability.

The main restrictions of above theorem lie in the fact that $u$ has zero boundary values on a convex domain. Under specific restrictions on the exponents above results can obviously expanded to  a local setting when considering $\tilde{u} = \varphi u$ or by testing the equation with $\varphi u_{\lambda}$ (and not the truncated version $u_{\lambda}$ direclty). The assumption of convexity (which only enters in at one point) can likely be dropped.

Coming back to the observation of \textsc{Colombo \& Tione}: Theorem \ref{thm:main3} also entails the following.

\begin{coro}  \label{coro:main4}
    Suppose that $\Omega$ is a convex domain. Further suppose that $\varepsilon_0=\varepsilon_0(p,\nu)$ is such that for any solution $u \in W^{1,1}_0(\Omega)$ to the equation
    \begin{equation*}
            \divergence(A(x,\nabla u)) = \divergence f
    \end{equation*}
    we have for any $\varepsilon< \varepsilon_0$
    \[
    \nabla u \in L^{p-\varepsilon} \quad \Rightarrow \quad \nabla u \in L^p.
    \]
    Then this 'higher integrability' property also holds true if we just assume
    \[
    (\partial_i u)_+ \in L^r,~r > \max \{p-1,1\} \quad \text{and} \quad (\partial_i u)_- \in L^{p-\varepsilon} \quad \text{for all } i=1,\ldots,n.
    \]
\end{coro}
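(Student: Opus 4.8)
The plan is to deduce Corollary \ref{coro:main4} from Theorem \ref{thm:main3} by a short bookkeeping argument on the integrability exponents, followed by one application of the assumed higher-integrability property. Fix $\varepsilon<\varepsilon_0$ and let $u\in W^{1,1}_0(\Omega)$ solve $\divergence(A(x,\nabla u))=\divergence f$ with $A$ of the form \eqref{eq:form:A} satisfying \ref{item:A1}--\ref{item:A2} and (as in Theorem \ref{thm:main3}) $f\in(L^1\cap L^\infty)(\R^n;\R^n)$, and assume $(\partial_i u)_+\in L^r$ with $r>\max\{p-1,1\}$ and $(\partial_i u)_-\in L^{p-\varepsilon}$ for all $i$.

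First I would dispose of the trivial case. Since $\Omega$ is bounded, $|\partial_i u|=(\partial_i u)_++(\partial_i u)_-\in L^{\min\{r,\,p-\varepsilon\}}(\Omega)$, hence $\nabla u\in L^{\min\{r,\,p-\varepsilon\}}(\Omega)$. If $p-\varepsilon\le r$ we already have $\nabla u\in L^{p-\varepsilon}$ with $\varepsilon<\varepsilon_0$, so the assumed property immediately gives $\nabla u\in L^p$. Hence from now on I may assume $r<p-\varepsilon$; then $\nabla u\in L^r$ and, together with the zero boundary values and the Poincar\'e inequality on the bounded convex set $\Omega$, $u\in W^{1,r}_0(\Omega;\R)$.

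Now I would apply Theorem \ref{thm:main3} with the choice $q:=p-\varepsilon$: indeed $\max\{1,p-1\}<r<p$, $r<q<p$, $\Omega$ is convex, $f\in L^1\cap L^\infty$, and $(\partial_i u)_-\in L^q$ for all $i$, so all hypotheses are in force and the theorem yields $\nabla u\in L^{q-\delta}=L^{p-\varepsilon-\delta}$ for every $\delta>0$. Since $\varepsilon<\varepsilon_0$, I then fix $\delta>0$ small enough that $\varepsilon':=\varepsilon+\delta<\varepsilon_0$, so that $\nabla u\in L^{p-\varepsilon'}$ with $\varepsilon'<\varepsilon_0$; applying the assumed higher-integrability property with $\varepsilon'$ in place of $\varepsilon$ gives $\nabla u\in L^p$, which is the claim.

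There is no genuine obstacle: the corollary is a repackaging of Theorem \ref{thm:main3}. The only points demanding (minor) care are that the exponent $q=p-\varepsilon$ must lie strictly between $r$ and $p$ for Theorem \ref{thm:main3} to apply---which is exactly why the case $r\ge p-\varepsilon$ is peeled off first---and that the freedom in the loss $\delta$ in the conclusion of Theorem \ref{thm:main3} is what lets us re-enter the range $\{\varepsilon'<\varepsilon_0\}$ on which the assumed property may be invoked.
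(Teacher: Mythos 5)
Your proposal is correct and follows the same route as the paper, which dispatches the corollary with the single remark that it ``is now a simple consequence of Theorem \ref{thm:main3}.'' You simply make explicit the elementary exponent bookkeeping the paper leaves implicit: peeling off the trivial case $r\geq p-\varepsilon$, applying Theorem \ref{thm:main3} with $q=p-\varepsilon$, and using the free loss $\delta>0$ to land inside the window $\{\varepsilon'<\varepsilon_0\}$ where the assumed self-improvement can be invoked. (The only tacit assumption you add is that $\Omega$ is bounded, which is needed both for the inclusion $L^r\subset L^{p-\varepsilon}$ in the trivial case and for the Poincar\'e step promoting $u$ to $W^{1,r}_0$; the paper uses this implicitly as well.)
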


In other words, the 'positive' result of \textsc{Colombo \& Tione} \cite{CT} entails that the staircase laminates construction breaks down, if the staircase construction is bounded to one side; whereas Theorem \ref{thm:main3} and Corollary \ref{coro:main4} tells us that a even a truly asymmetric staircase construction (in the sense that one sign is of completely different integrability than the other sign) is, at least for $p$-Laplace type systems, impossible (cf. Figure \ref{fig1}).
\begin{figure} \label{fig1}
    \centering
    \includegraphics[scale=0.2]{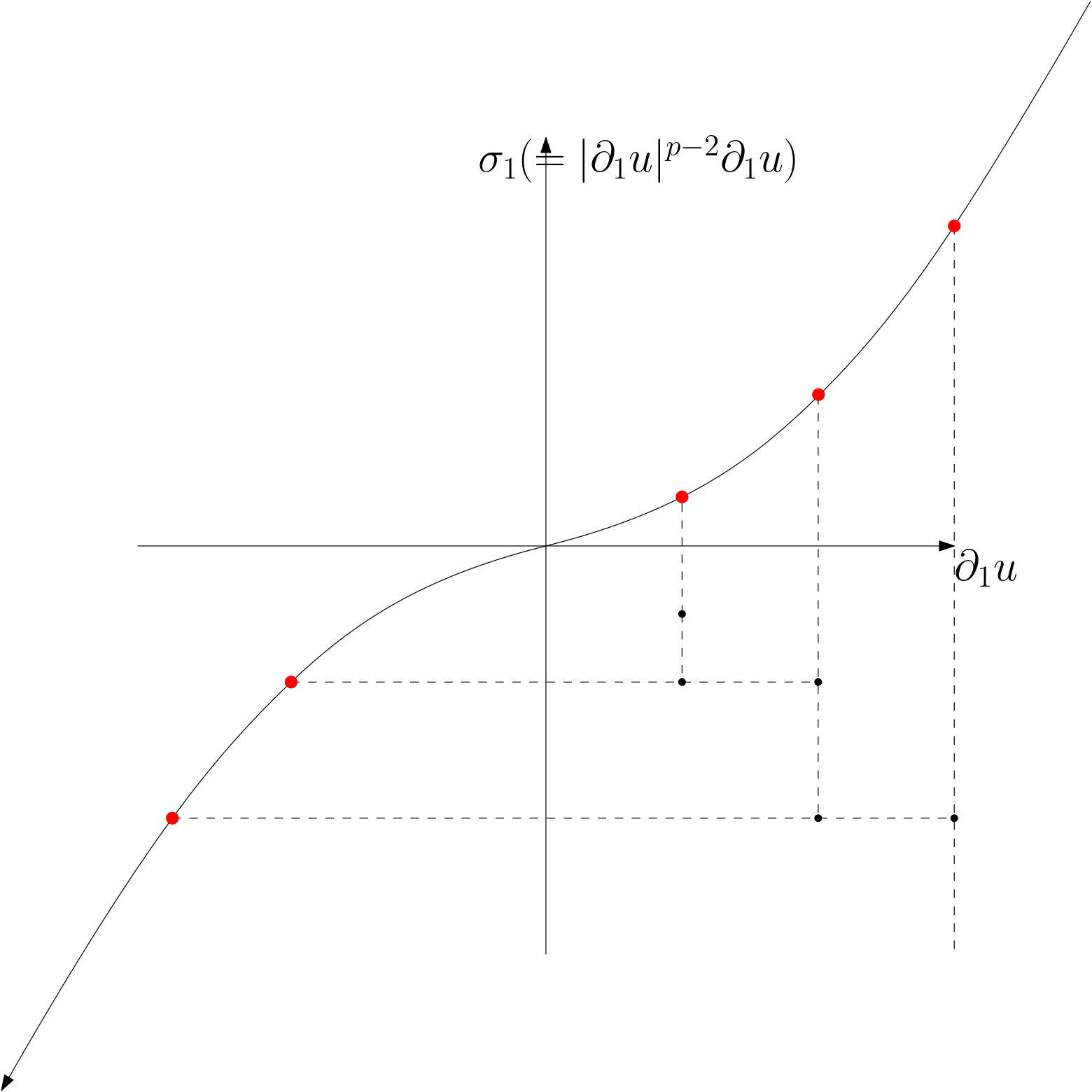}\caption{Rough outline of the start of the staircase construction: The elliptic equation is seen as a differential inclusion, i.e as a pair $(\nabla u,\sigma)$ where $\divergence \sigma=0$ and $\sigma= \vert \nabla u \vert^{p-2} \nabla u$. At the end of the construction (up to some small error), we obtain a pair $(\nabla u,\sigma)$ such that $(\nabla u(x),\sigma(x))$ is contained in the red points almost everywhere. For this staircase construction (which is based on rank-one connections) to work it is crucial that $\partial_1 u$ is allowed to have (unbounded) positive and negative values. }
\end{figure}

It should be mentioned that the $L^{\infty}$-version of Corollary \ref{coro:main4} \emph{includes} the exponent $r = \max\{1,p-1\}$. If $p <2$, the method chosen in this work is (so far) unable to reach the exponent $r=1$; this fact should probably attributed to details in the proof and it should be expected that the result of Corollary \ref{coro:main4} is also valid for $r=1$.
\subsection{Structure}
We give a short outline of the remainder of this article. In Section \ref{sec:prelim}, we first remind the reader of some (by now) standard results on Lipschitz truncation. We then show a modification of those results that is tailored to our framework. The 'standard' Lipschitz truncation results are then used to show Theorems \ref{thm:main1} and \ref{thm:main2} in Section \ref{sec:determinant}. Finally, in Section \ref{sec:pLaplace} we show Theorem \ref{thm:main3}.
\subsection*{Acknowledgements} Parts of this paper were written during a research stay at the Max-Planck-Institute for Mathematics in the Sciences in Leipzig and the author thanks the institute for its kind hospitality.
\subsection*{Notation} We denote constants by $C$. The value of $C$ may change from line to line. We for instance write $C(\varepsilon)$ or, if the value of the constant depends on $\varepsilon$.

\section{Preliminaries} \label{sec:prelim}
\subsection{Standard Lipschitz truncation}
We shortly remind the reader of some crucial notions. Recall that the (Hardy-Littlewood) maximal function is defined via
\[
\M v(x) := \sup_{r>0} \fint_{B_r(x)} \vert v \vert(y) \dy.
\]
It is well-known that this operator $\M$ is sublinear, bounded from $L^{\infty}$ to $L^{\infty}$ and from $L^1$ to the weak space $L^{1,\infty}$, meaning that 
\begin{equation} \label{eq:max:1}
    \LL^n(\{ \M v \geq \lambda \}) \leq \frac{C_{\M}}{\lambda} \int \vert v \vert \dx.
\end{equation}

The following result (e.g. \cite{AF84,Zhang}) relies on two crucial observations:
\begin{enumerate} [label=(\roman*)]
    \item a weakly differentiable function is Lipschitz continuous on sublevel sets of its maximal function (cf. \cite{AF84,Liu});
    \item we can extend any Lipschitz function on a set $X \subset \R^n$ to the full space (\cite{Kirszbraun,Whitney}).
\end{enumerate}

\begin{thm}[Lipschitz truncation] \label{thm:LT}
    Suppose that $u \in W^{1,1}(\R^n;\R^m)$.
  For any $\lambda>0$ there exists a function $u_{\lambda} \in W^{1,\infty}(\R^n;\R^m)$ such that 
        \[
        \Vert \nabla u_{\lambda} \Vert_{L^{\infty}} \leq c'(n) \lambda
        \]
        and 
        \[
        \{ u \neq u_{\lambda} \} \subset \{\M
        (\nabla u) > \lambda\}.
        \] 
\end{thm}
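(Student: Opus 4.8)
The plan is to prove the Lipschitz truncation theorem via the two observations highlighted in the excerpt: first that $u$ is already Lipschitz on the "good set" where the maximal function of $\nabla u$ is controlled, and second that Lipschitz functions extend. Concretely, fix $\lambda>0$ and set $H_\lambda := \{x \in \R^n : \M(\nabla u)(x) > \lambda\}$, which is open (the maximal function is lower semicontinuous) and whose complement $G_\lambda := \R^n \setminus H_\lambda$ is the set on which we will leave $u$ untouched. The first step is to show that the precise representative of $u$, restricted to the set of Lebesgue points lying in $G_\lambda$, is Lipschitz with constant $\lesssim \lambda$. This is the classical pointwise estimate: for Lebesgue points $x,y$ one writes $|u(x)-u(y)| \lesssim |x-y|\big(\M(\nabla u)(x) + \M(\nabla u)(y)\big)$, which follows by a chain of averaged-difference estimates (Poincaré on balls of radius $\sim |x-y|$ centered at $x$ and $y$ together with a telescoping over dyadic radii), and the right-hand side is $\lesssim \lambda |x-y|$ precisely when both points lie in $G_\lambda$. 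One should also note that $\LL^n(H_\lambda \cap \{|u| > \text{something}\})$ or rather $\LL^n$-a.e.\ point of $\R^n$ is a Lebesgue point, so this redefinition on a null set is harmless.

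The second step is the extension. Having a function that is $c\lambda$-Lipschitz on the closed-up-to-null-set $G_\lambda$, I would invoke the Kirszbraun/Whitney extension theorem (cited in the excerpt) to produce $u_\lambda \in \Lip(\R^n;\R^m)$ agreeing with $u$ on $G_\lambda$ (at least at Lebesgue points) with $\Lip(u_\lambda) \leq c'(n)\lambda$; since a Lipschitz function on $\R^n$ lies in $W^{1,\infty}$ with $\|\nabla u_\lambda\|_{L^\infty} = \Lip(u_\lambda) \leq c'(n)\lambda$, the first asserted bound is immediate. For the inclusion $\{u \neq u_\lambda\} \subset H_\lambda$: by construction $u_\lambda = u$ at every Lebesgue point of $G_\lambda$, hence $u = u_\lambda$ almost everywhere on $G_\lambda$, so $\{u \neq u_\lambda\}$ is (up to a null set, or exactly if one is careful to work with precise representatives) contained in $H_\lambda = \{\M(\nabla u) > \lambda\}$, which is what is claimed.

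A minor bookkeeping point I would address is that $u \in W^{1,1}(\R^n)$ only gives $\nabla u \in L^1$, so $\M(\nabla u)$ is a priori only in $L^{1,\infty}$ and could be $+\infty$ on a large set in measure terms — but \eqref{eq:max:1} bounds $\LL^n(H_\lambda)$ by $C_\M \lambda^{-1}\|\nabla u\|_{L^1}$, so $H_\lambda$ has finite measure and $G_\lambda$ is nonempty (indeed cofinite in measure) for every $\lambda>0$; the extension step is unaffected since the Lipschitz estimate is purely local along segments and the Kirszbraun theorem needs no integrability hypothesis.

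The main obstacle is the first step — establishing that $u$ has a representative that is genuinely $\lesssim\lambda$-Lipschitz on $G_\lambda$, not merely "Lipschitz on the set of Lebesgue points with a constant depending on both endpoints." The subtlety is that the naive pointwise bound $|u(x)-u(y)| \lesssim |x-y|(\M(\nabla u)(x)+\M(\nabla u)(y))$ controls differences only between Lebesgue points, whereas to extend one wants a uniform Lipschitz bound on a closed set; one resolves this by noting $\LL^n$-a.e.\ point of $G_\lambda$ is a Lebesgue point, redefining $u$ there, and checking the inequality persists to the closure by continuity of the redefined map on (a full-measure subset of) $G_\lambda$. This is standard (it is exactly observation (i) of the excerpt, attributed to \cite{AF84,Liu}), so I would cite it rather than reprove the averaged-difference telescoping in detail, and spend the write-up instead on making the a.e.\ identifications clean.
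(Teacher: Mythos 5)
Your proposal is correct and is essentially the paper's own approach: the paper does not give a detailed proof of Theorem~\ref{thm:LT} but instead cites \cite{AF84,Zhang} and states that the result follows from precisely the two observations you use — (i) the pointwise estimate $|u(x)-u(y)|\lesssim |x-y|\bigl(\M(\nabla u)(x)+\M(\nabla u)(y)\bigr)$ showing that $u$ is Lipschitz on sublevel sets of $\M(\nabla u)$, and (ii) Lipschitz extension via Kirszbraun/Whitney. Your handling of the Lebesgue-point bookkeeping and the a.e.\ identification for $\{u\neq u_\lambda\}\subset\{\M(\nabla u)>\lambda\}$ is also the standard resolution and is correct.
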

Observe that if $u$ has compact support, so does $u_{\lambda}$:  Outside of a possibly big ball around the support, $\M (\nabla u) \leq \lambda$. Further recall the following estimate of \cite{Zhang} that directly gives 
\begin{lemma} \label{lemma:Zhang}
Let $\lambda>0$ and $v \in L^1(\R^n;\R^m)$. Then 
\begin{equation} \label{eq:max:3}
    \LL^n(\{ \M v > \lambda\}) \leq \frac{c}{\lambda} \int_{\{\vert v \vert > \lambda/2\}} \vert v \vert \dx.
\end{equation}
\end{lemma}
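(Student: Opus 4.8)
The plan is to derive this directly from the weak-type $(1,1)$ estimate \eqref{eq:max:1} via the classical trick of truncating the \emph{function} itself (rather than analysing the super-level set of $\M v$ directly). First I would split $v = v^{\mathrm{good}} + v^{\mathrm{bad}}$, where $v^{\mathrm{bad}} := v\,\indicator_{\{\vert v\vert > \lambda/2\}}$ collects the large values and $v^{\mathrm{good}} := v - v^{\mathrm{bad}} = v\,\indicator_{\{\vert v\vert \le \lambda/2\}}$ satisfies $\vert v^{\mathrm{good}}\vert \le \lambda/2$ pointwise. Since $\M$ maps $L^\infty$ to $L^\infty$ with norm one, this forces $\M v^{\mathrm{good}} \le \lambda/2$ everywhere. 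Note also that $v^{\mathrm{bad}} \in L^1(\R^n;\R^m)$ because it is dominated by $\vert v\vert$.

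Next I would invoke sublinearity of $\M$ to get $\M v \le \M v^{\mathrm{good}} + \M v^{\mathrm{bad}} \le \lambda/2 + \M v^{\mathrm{bad}}$, so that $\{\M v > \lambda\} \subseteq \{\M v^{\mathrm{bad}} > \lambda/2\}$. Applying \eqref{eq:max:1} to $v^{\mathrm{bad}}$ then yields
\[
\LL^n(\{\M v > \lambda\}) \le \LL^n(\{\M v^{\mathrm{bad}} > \lambda/2\}) \le \frac{2 C_{\M}}{\lambda}\int_{\R^n} \vert v^{\mathrm{bad}}\vert \dx = \frac{2 C_{\M}}{\lambda}\int_{\{\vert v\vert > \lambda/2\}} \vert v\vert \dx,
\]
which is the claimed bound with $c = 2 C_{\M}$.

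There is essentially no genuine obstacle: this is a standard refinement of the Hardy--Littlewood maximal inequality. The only point requiring a little care is matching the truncation threshold with the factor lost in the sublinearity step, so that the contribution of $v^{\mathrm{good}}$ is genuinely absorbed; any threshold $\theta\lambda$ with $0 < \theta < 1$ works and gives the constant $C_{\M}/((1-\theta)\lambda)$, and the choice $\theta = 1/2$ is the conventional one. Since $v \in L^1$ all the integrals above are finite, so every manipulation is justified.
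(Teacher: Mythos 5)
Your proof is correct. The paper does not actually prove this lemma: it simply cites it as a known estimate from [Zhang]. Your argument---splitting $v$ into a good part bounded by $\lambda/2$ and a bad part supported on $\{|v|>\lambda/2\}$, using $L^\infty$-boundedness to absorb the good part via sublinearity, and then applying the weak-$(1,1)$ estimate to the bad part---is the standard and complete proof of this refinement, and the constant-tracking (any threshold $\theta\lambda$ with $\theta\in(0,1)$ works) is stated accurately.
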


\subsection{Asymmetric Lipschitz truncation}
The constrained higher integrability of elliptic system \eqref{intro:part2} needs a type of Lipschitz truncation that respects that positive and negative part of partial derivatives cannot be treated equally. For this we need a type of Lipschitz truncation that is able to distinguish between positive and negative part of a function.

First, we define what it means to be asymmetrically Lipschitz continuous.
\begin{definition}
   Let $\lambda,\mu>0$. For $ t \in \R$ we define 
    \[
    d_{\lambda,\mu}(t) = \begin{cases}
        \lambda t & \text{if } t>0,  \\
        - \mu t  & \text{if } t>0.
    \end{cases}
    \]
    and write $d_{\lambda,\mu}(x,y) = d_{\lambda,\mu}(x-y)$. For $x,y \in \R^n$ we define
    \[
    d_{\lambda,\mu}(x,y) = \sum_{i=1}^n d_{\lambda,\mu}(x_i-y_i).
    \]
\end{definition}
\begin{remark}
    Observe that $d_{\lambda,\mu}$ is not a metric, as it is non-symmetric. It is however positive definite and satisfies the triangle inequality. This can be shown by a simple case distinction.
\end{remark}
\begin{definition}
    Let $F \colon X \subset \R^n \to \R$. We say that $F$ is (asymetrically) $(\lambda,\mu)$-Lipschitz continuous if for any $x,y \in X$ 
    \begin{equation} \label{def:asLip}
        u(x) - u(y) \leq d_{\lambda,\mu}(x,y).
    \end{equation}
\end{definition}
Observe that this inequality also (by swapping roles of $x$ and $y$) gives the lower bound 
\[
u(x) - u(y) \geq - d_{\lambda,\mu}(y,x).
\]
Therefore, if $\lambda= \mu$, $(\lambda,\mu)$-Lipschitz continuity is equivalent to $\lambda$-Lipschitz continuity with respect to the $\ell^1$-norm on $\R^n$.

The following extension lemma then follows the standard argument by \textsc{Kirszbraun} \cite{Kirszbraun}:

\begin{lemma} \label{lemma:extension}
    Suppose that $X \subset \R^n$ is closed and $F \colon X \to \R$ is asymmetrically $(\lambda,\mu)$-Lipschitz continuous. Then there exists a function $\tilde{F} \colon \R^n \to \R$ that coincides with $F$ on $X$ and is $(\lambda,\mu)$-Lipschitz continuous.
\end{lemma}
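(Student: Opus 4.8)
The plan is to mimic the classical Kirszbraun–McShane one-dimensional extension argument, replacing the Euclidean distance by the asymmetric functional $d_{\lambda,\mu}$. The key structural facts we will use are exactly the two noted in the remark and the discussion following the definition: $d_{\lambda,\mu}$ is positive (i.e. $d_{\lambda,\mu}(x,y)\ge 0$ with equality iff $x=y$) and satisfies the triangle inequality $d_{\lambda,\mu}(x,z)\le d_{\lambda,\mu}(x,y)+d_{\lambda,\mu}(y,z)$; note these follow coordinatewise from the one-variable function $d_{\lambda,\mu}(t)$, which is convex and subadditive in $t$. Because the target is $\R$ (a totally ordered one-dimensional space) we do not need the full strength of Kirszbraun's theorem for Hilbert-space targets — the McShane/Whitney sup-formula suffices, and the only thing that must be checked is that the sup-formula produces a finite function that is again $(\lambda,\mu)$-Lipschitz.

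First I would define, for $x\in\R^n$,
\[
\tilde F(x) := \sup_{y\in X}\bigl(F(y) - d_{\lambda,\mu}(y,x)\bigr).
\]
The asymmetry of $d_{\lambda,\mu}$ forces a choice of which argument to truncate; this particular choice is dictated by the asymmetric Lipschitz inequality \eqref{def:asLip}, which reads $F(y)-F(z)\le d_{\lambda,\mu}(y,z)$, i.e. $F(z)\ge F(y)-d_{\lambda,\mu}(y,z)$. Next I would show $\tilde F$ is finite and agrees with $F$ on $X$: for $z\in X$, the inequality just displayed shows every term in the sup defining $\tilde F(z)$ is $\le F(z)$, so $\tilde F(z)\le F(z)$; taking $y=z$ (and using $d_{\lambda,\mu}(z,z)=0$) shows $\tilde F(z)\ge F(z)$. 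For general $x$, fix any $z_0\in X$; using the triangle inequality $d_{\lambda,\mu}(y,z_0)\le d_{\lambda,\mu}(y,x)+d_{\lambda,\mu}(x,z_0)$ together with the Lipschitz bound on $F$ gives $F(y)-d_{\lambda,\mu}(y,x)\le F(z_0)+d_{\lambda,\mu}(x,z_0)<\infty$ uniformly in $y$, so the sup is finite.

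Then I would verify that $\tilde F$ is $(\lambda,\mu)$-Lipschitz on all of $\R^n$, i.e. $\tilde F(x_1)-\tilde F(x_2)\le d_{\lambda,\mu}(x_1,x_2)$ for all $x_1,x_2$. Given $\eta>0$, pick $y\in X$ with $F(y)-d_{\lambda,\mu}(y,x_1)\ge \tilde F(x_1)-\eta$; then by definition of $\tilde F(x_2)$ as a sup and the triangle inequality $d_{\lambda,\mu}(y,x_1)\le d_{\lambda,\mu}(y,x_2)+d_{\lambda,\mu}(x_2,x_1)$ — wait, that is the wrong direction, so instead use $d_{\lambda,\mu}(y,x_2)\le d_{\lambda,\mu}(y,x_1)+d_{\lambda,\mu}(x_1,x_2)$ to get
\[
\tilde F(x_2)\ge F(y)-d_{\lambda,\mu}(y,x_2)\ge F(y)-d_{\lambda,\mu}(y,x_1)-d_{\lambda,\mu}(x_1,x_2)\ge \tilde F(x_1)-\eta-d_{\lambda,\mu}(x_1,x_2),
\]
and letting $\eta\to 0$ yields the claim. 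Finally, one should record that $\tilde F$ is in particular Lipschitz in the ordinary sense (with constant $\max\{\lambda,\mu\}$ times a dimensional factor, since $d_{\lambda,\mu}$ is comparable to $\max\{\lambda,\mu\}$ times the $\ell^1$-norm), hence continuous, so the extension is genuinely to $C^{0,1}(\R^n)$.

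**Main obstacle.** There is no deep obstacle here — the argument is essentially the soft McShane extension — but the one genuine point of care is bookkeeping the asymmetry: one must consistently track which slot of $d_{\lambda,\mu}$ plays which role, since swapping arguments swaps $\lambda$ and $\mu$, and the triangle inequality must be applied in the orientation that matches the sup-formula. The underlying analytic input that makes this work (and which I would state explicitly, perhaps as a small sub-lemma proved by the "simple case distinction" alluded to in the remark) is that the scalar function $t\mapsto d_{\lambda,\mu}(t)$ is subadditive and nonnegative, from which the $n$-dimensional triangle inequality and positivity follow by summing over coordinates.
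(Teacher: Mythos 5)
Your proof is correct, but it follows a genuinely different route from the paper. The paper adapts the Kirszbraun--Whitney sequential argument: it first shows that one can always extend the function to a single additional point $z\notin X$ (the extendability condition being exactly the triangle inequality for $d_{\lambda,\mu}$), then passes to arbitrary finite sets by induction, then to a countable dense set $Z$ by a limit, and finally to all of $\R^n$ by continuity. You instead produce the extension in one shot via the McShane--Whitney sup-formula $\tilde F(x)=\sup_{y\in X}\bigl(F(y)-d_{\lambda,\mu}(y,x)\bigr)$, using the triangle inequality to check finiteness, agreement on $X$, and the asymmetric Lipschitz bound directly. Your approach is more explicit and avoids the countable-set/limit bookkeeping; the paper's approach makes the role of the triangle inequality as an ``extendability criterion'' more transparent and mirrors the classical Kirszbraun proof it cites. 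One small bookkeeping point you handled correctly but which is the only genuine pitfall: the asymmetry of $d_{\lambda,\mu}$ forces you to subtract $d_{\lambda,\mu}(y,x)$ (and not $d_{\lambda,\mu}(x,y)$) inside the sup, and the triangle inequality must then be invoked in the orientation $d_{\lambda,\mu}(y,x_2)\le d_{\lambda,\mu}(y,x_1)+d_{\lambda,\mu}(x_1,x_2)$; mixing these up would silently swap the roles of $\lambda$ and $\mu$.
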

\begin{proof}
    It suffices to show the following: If $z \notin X$, then we can extend the function $F \colon X \to \R$ to $\tilde{F} \colon X \cup \{z\} \to \R$ while retaining the property of being $(\lambda,\mu)$-Lipschitz continuous. By an inductive argument one then may show that for any finite set $Z$ there exists an extension $\tilde{F} \colon X \cup Z \to \R$ and, by considering the limit, we can show it for any countable set $Z$. By picking a countable set $Z$ that is dense in $\R^n \setminus X$, we can afterwards extend $\tilde{F} \colon X \cup Z \to \R$ to $\tilde{F} \colon \R^n \to \R$ by extending continuously.

    To show the initial claim (extension to one point), it suffices to show that we can pick an $a \in \R$, such that  $\tilde{F} \colon X \to \R$ is $(\lambda,\mu)$-Lipschitz continuous if we set $\tilde{F}(z)=a$. The set of such $a$ is characterised by all $a$ that satisfy
    \[
        a -u(x) \leq d_{\lambda,\mu}(z,x) \quad \text{and} \quad u(y)-a \leq d_{\lambda}(y,z) \quad \text{for all } x,y \in X.
    \]
    If this set were empty, there would exist $x,y \in X$ such that
    \[
    u(y) -u(x) > d_{\lambda,\mu}(y,z)+d_{\lambda,\mu}(z,x).
    \]
    Due to the previously mentioned triangle inequality, this is a contradiction as it would entail 
    \[
    u(y) -u(x) > d_{\lambda,\mu}(y,x).
    \]
\end{proof}

The other ingredient to formulating an asymmetric version of Lipschitz truncation for a function $u \colon \R^n \to \R$ is finding an appropriate set $X$ in which $u$ is $(\lambda,\mu)$-Lipschitz continuous.

For a function $v$ we write 
\[
(v)_+ = \max(v,0), \quad (v)_- = \max(-v,0) \quad \text{such that} \quad v = (v)_+-(v)_-.
\]
For vector valued function we use this notation component-wise, i.e.
\[
((v)_+)_i = (v_i)_+.
\]
As the coordinate directions $(e_i)$ and $(-e_j)$ are not treated equally by our definition of Lipschitz continuity (even for $i=j$), we also need another version of the maximal function. A suitable modification is provided by the following lemma:

\begin{lemma} \label{lemma:MF}
    For $u \in L^1_{loc}(\R^n)$ we define 
    \[
        \Ncal u(x) \coloneqq \sup_{r_1,\ldots,r_n} \frac{1}{2^n r_1 \ldots r_n} \int_{Q_{r_1,\ldots r_n(x)}} \vert u (z) \vert \dz,
    \]
    where for $x=(x_1,\ldots,x_n) \in \R^n$, $Q_{r_1,\ldots,r_n}(x) = (x_1-r_1,x_1+r_1) \times \ldots (x_n-r_n,x_n+r_n)$.
    Then \begin{enumerate} [label=(\roman*)]
        \item $\Ncal$ is sublinear, i.e. $\Ncal(u+v) \leq \Ncal(u)+\Ncal(v)$;
        \item $\Ncal$ is bounded from $L^p$ to $L^p$ for any $1<p \leq \infty$ and in particular there is a dimensional constant $c>0$ such that
        \[
          \Vert \Ncal u \Vert_{L^p} \leq  c \left( \tfrac{p}{p-1} \right)^n \Vert u \Vert_{L^p};
        \]
        \item For each $0< \varepsilon$
        \[
            \LL^n( \{ \Ncal u \geq \lambda\}) \leq \frac{C(\varepsilon)}{\lambda^{1+\varepsilon}} \int_{\{ \vert u \vert \geq \lambda/2\}} \vert u \vert \dx.
        \]
    \end{enumerate}  
\end{lemma}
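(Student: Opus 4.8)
The operator $\Ncal$ is a "strong" (or anisotropic) maximal function built from rectangles with arbitrary side lengths, so the natural strategy is to factor it into a composition of one-dimensional Hardy--Littlewood maximal operators, one per coordinate direction, and then apply known bounds iteratively. Concretely, write $M_i$ for the centred one-dimensional maximal operator acting on the $i$-th variable (averaging over intervals $(x_i-r_i,x_i+r_i)$ with the other variables frozen). Then $\Ncal u(x) \le (M_1 \circ M_2 \circ \cdots \circ M_n)(\vert u\vert)(x)$, since the supremum defining $\Ncal$ over product rectangles is dominated by successively taking the best interval in each direction. Sublinearity (i) is then immediate, because each $M_i$ is sublinear and a composition of sublinear operators is sublinear; alternatively one sees it directly from the definition since $\vert u+v\vert \le \vert u\vert + \vert v\vert$ under the integral.

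For part (ii), the plan is to invoke the one-dimensional Hardy--Littlewood maximal theorem in each slice: for fixed values of the other coordinates, $\Vert M_i g\Vert_{L^p(\R, dx_i)} \le c_p \Vert g\Vert_{L^p(\R,dx_i)}$ with $c_p \le C \frac{p}{p-1}$ for $1<p\le\infty$ (the $L^\infty$ bound being trivial with constant $1$, the sharp $p$-dependence of the weak-$(1,1)$/interpolation constant giving the linear blow-up in $\frac{p}{p-1}$). Applying this with Fubini one coordinate at a time, each application costs a factor $c_p$, so after $n$ applications one gets $\Vert \Ncal u\Vert_{L^p} \le c_p^{\,n}\Vert u\Vert_{L^p} \le c(\tfrac{p}{p-1})^n\Vert u\Vert_{L^p}$, which is exactly (ii). I should be a little careful that iterating $L^p(L^p(\cdots))$ really does collapse to $L^p$ of the product, which is just Fubini--Tonelli, and that measurability of the partial maximal functions is fine because the sup over $r_i$ can be taken over rationals.

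For part (iii), the idea is to combine (ii) with a truncation-at-level-$\lambda/2$ trick, exactly mirroring Lemma \ref{lemma:Zhang}. Fix $\varepsilon>0$ and set $p = 1+\varepsilon$. Split $u = u\,\indicator_{\{\vert u\vert \ge \lambda/2\}} + u\,\indicator_{\{\vert u\vert < \lambda/2\}} =: u_1 + u_2$. Since each coordinate average of $\vert u_2\vert$ is at most $\lambda/2$, we get $\Ncal u_2 \le \lambda/2$ pointwise, hence by sublinearity $\{\Ncal u \ge \lambda\} \subset \{\Ncal u_1 \ge \lambda/2\}$. Now apply Chebyshev together with the $L^{1+\varepsilon}$ bound from (ii):
\[
\LL^n(\{\Ncal u_1 \ge \lambda/2\}) \le \frac{2^{1+\varepsilon}}{\lambda^{1+\varepsilon}} \Vert \Ncal u_1\Vert_{L^{1+\varepsilon}}^{1+\varepsilon} \le \frac{2^{1+\varepsilon} c^{1+\varepsilon}(\tfrac{1+\varepsilon}{\varepsilon})^{n(1+\varepsilon)}}{\lambda^{1+\varepsilon}} \int_{\{\vert u\vert \ge \lambda/2\}} \vert u\vert^{1+\varepsilon} \dx.
\]
This is almost the claimed estimate, but the right-hand side has $\vert u\vert^{1+\varepsilon}$ rather than $\vert u\vert$. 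To fix this, bound $\vert u\vert^{1+\varepsilon} = \vert u\vert \cdot \vert u\vert^{\varepsilon}$ on the set $\{\vert u\vert \ge \lambda/2\}$ — but $\vert u\vert$ is not bounded there, so instead I truncate from above as well: replace $u_1$ by $u_1^{\lambda} := \min(\vert u\vert,\lambda)\,\sgn(u)\,\indicator_{\{\vert u\vert\ge\lambda/2\}}$ when estimating, noting $\Ncal(u - u_1^{\lambda}) \le \lambda/2$ still (the part where $\vert u\vert$ exceeds $\lambda$ contributes a function whose averages... ) — this is the delicate point.

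\textbf{Main obstacle.} The genuinely delicate step is part (iii): getting the right-hand side to involve $\int_{\{\vert u\vert\ge\lambda/2\}}\vert u\vert$ (first power) rather than $\int\vert u\vert^{1+\varepsilon}$, while the operator $\Ncal$ is not weak-$(1,1)$ (anisotropic maximal functions genuinely fail weak-$(1,1)$, which is why the $\lambda^{-(1+\varepsilon)}$ and not $\lambda^{-1}$ appears). The clean way around this is a dyadic decomposition of the "bad" part $u_1$ into level sets $A_k = \{2^{k-1}\lambda \le \vert u\vert < 2^k\lambda\}$ for $k\ge 0$ (together with $\{\lambda/2 \le \vert u\vert < \lambda\}$); on each $A_k$ the function $\vert u\vert$ is comparable to $2^k\lambda$, so $\int_{A_k}\vert u\vert^{1+\varepsilon} \approx (2^k\lambda)^{\varepsilon}\int_{A_k}\vert u\vert$. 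Apply the $L^{1+\varepsilon}$ bound to $u\,\indicator_{A_k}$, pick up $(2^k\lambda)^{\varepsilon}\int_{A_k}\vert u\vert / \lambda^{1+\varepsilon}$, and observe that the superlevel set $\{\Ncal(u\,\indicator_{A_k}) \ge c 2^{-k}\lambda/k^2\}$ (say) is what one needs; summing the geometric-type series in $k$ (the $2^{k\varepsilon}$ from the numerator is beaten by a $2^{-k}$-type gain from the threshold, for $\varepsilon$ small, after choosing the split of the level $\lambda$ among the $A_k$ with summable weights like $k^{-2}$) yields the final bound with a constant $C(\varepsilon)$ blowing up as $\varepsilon\to 0$. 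I would carry this out carefully, as balancing the geometric factors is where all the real work is; everything else (sublinearity, the iterated $L^p$ bound) is routine.
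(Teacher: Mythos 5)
Your parts (i) and (ii) follow the paper's proof essentially verbatim: the domination $\Ncal \le M_n\circ\cdots\circ M_1$ by iterated one-dimensional maximal operators, Fubini, and the one-dimensional $L^p$ bound with constant $\lesssim p/(p-1)$ per coordinate. The paper reduces to continuous functions first for measurability, but that is a routine detail; no substantive difference.

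For part (iii), the first half of your argument also coincides with the paper's: truncate, use sublinearity to get $\{\Ncal u\ge\lambda\}\subset\{\Ncal u_1\ge\lambda/2\}$, then apply Chebyshev and the $L^{1+\varepsilon}$ bound from (ii) to obtain
\[
\LL^n(\{\Ncal u\ge\lambda\})\le \frac{C(\varepsilon)}{\lambda^{1+\varepsilon}}\int_{\{\vert u\vert\ge\lambda/2\}}\vert u\vert^{1+\varepsilon}\dx.
\]
At this point you should \emph{stop}: this \emph{is} the intended conclusion. The power $\vert u\vert$ rather than $\vert u\vert^{1+\varepsilon}$ in the displayed statement of the lemma is a typo — the paper's own proof ends with $\vert u\vert^{1+\varepsilon}$, and the estimate is invoked later (item (T4) of both truncation corollaries and step \ref{U4} in the proof of Lemma \ref{lemma:main3}) precisely in the form with $\vert u\vert^{1+\varepsilon}$.

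The "main obstacle" you then set yourself — upgrading $\vert u\vert^{1+\varepsilon}$ to $\vert u\vert$ — is not an obstacle but an impossibility, and your proposed dyadic fix cannot work. Indeed, the bound with $\vert u\vert$ to the first power is false for $n\ge 2$. Take $u=N\indicator_{[0,1]^n}$ and $\lambda=1$. Choosing rectangles $Q_{x_1,\dots,x_n}(x)\supset[0,1]^n$ for $x$ in the positive orthant shows $\Ncal u(x)\gtrsim N/\prod_i x_i$, so
\[
\LL^n(\{\Ncal u\ge 1\})\gtrsim \LL^n\bigl(\{x_i>1,\ \textstyle\prod_i x_i\le cN\}\bigr)\approx N(\log N)^{n-1},
\]
whereas the right-hand side would be $C(\varepsilon)\int_{\{\vert u\vert\ge 1/2\}}\vert u\vert = C(\varepsilon)N$; the ratio $(\log N)^{n-1}$ is unbounded in $N$ for every fixed $\varepsilon>0$. (This is the familiar fact that the strong maximal operator is not of weak type $(1,1)$; its sharp endpoint is $L(\log L)^{n-1}\to L^{1,\infty}$.) You can also see the failure directly in your sketched dyadic scheme: on $A_k=\{2^{k-1}\lambda\le\vert u\vert<2^k\lambda\}$ the $L^{1+\varepsilon}$ bound gives $\int_{A_k}\vert u\vert^{1+\varepsilon}\approx(2^k\lambda)^{\varepsilon}\int_{A_k}\vert u\vert$, and with any summable splitting $\lambda_k\sim\lambda\,2^{-k}$ of the threshold the Chebyshev factor $\lambda_k^{-(1+\varepsilon)}$ produces $2^{k(1+\varepsilon)}$, so the resulting series has terms $\sim 2^{k(1+2\varepsilon)}\int_{A_k}\vert u\vert/\lambda$, which grows rather than decays in $k$. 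So: keep your first half of (iii), discard the dyadic attempt, and note the typo.
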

Two facts about the operator $\Ncal$ should be highlighted: First of all, the statement is only valid \emph{without} any rotation-invariance. If one would add 
\[
\tilde{N} u = \sup_{r_1,\ldots,r_n} \sup_{R \in \mathrm{SO}(n)} \frac{1}{2^n r_1 \cdot \ldots \cdot r_n} \int_{RQ_{r_1,...,r_n}(0)} \vert u(x+z) \vert \dz,
 \]
 then this operator is not bounded for general $p$. And secondly, the operator is \emph{not} bounded from $L^1$ to $L^{1,\infty}$; and if we consider Orlicz-type spaces, we lose multiple logarithms, i.e. $\Ncal \colon L^{\psi} \to L^1$ is bounded, where $\psi (t) = t (\log(2+t))^n$.
\begin{proof}
The first assertion is clear by definition. For the second assertion, boundedness from $L^{\infty} \to L^{\infty}$ (with constant $1$) is also simple. For $1<p<\infty$, we may reduce to continuous functions: Note that due to sublinearity, if $(u_j)_{j \in \N}$ is a Cauchy-sequence of continuous functions in $L^p$ then
\[
\vert \Ncal (u_j) - \Ncal(u_j) \vert \leq \Ncal(u_j-u_k).
\]
Therefore, $\Ncal(u_j)$ converges (to $\Ncal u$, where $u$ is the limit of $u_j$) if $\Ncal$ is bounded from $L^p$ to $L^p$ for continuous functions.

Now define 
\[
\M_i u = \sup_{r >0} \frac{1}{2r} \int_{-r}^r u(x+z e_i) \dz.
\]
If $u \in C_b(\R^n)$, this operation is well-defined and $\M_i u$ is a continuous function. Moroever, due to boundedness of the maximal function (and Fubini's theorem) in one dimension
\[
\Vert \M_i u \Vert_{L^p} \leq C(p) \Vert u \Vert_{L^p},
\]
where $C(p) \leq Cp/(p-1)$ is the constant asscociated to the maximal function mapping $L^p$ to $L^p$ in one dimension.
Observe that
\[
\tilde{\M} = \M_n \circ \M_{n-1} \circ \ldots \M_1
\]
is still bounded from $L^p$ to $L^o$ and 
\[
\Vert \tilde{M} u \Vert_{L^p} \leq C(p)^n \Vert u \Vert_{L^p}.
\]
But $\Ncal \leq \tilde{M}$ almost everywhere, as for any $r_1,\ldots,r_n$ we have
\begin{align*}
    \frac{1}{2^n r_1\ldots r_n} \int_{Q_{r_1,\ldots,r_n}} \vert u (z) \vert \dz 
    &\leq \frac{1}{2^{n-1}r_2 \ldots r_n} \int_{Q_{0,r_2,\ldots,r_n}(x)} \vert \M_1 u(z) \vert \dH^{n-1}(z) \\
    & \leq \ldots \\
    & \leq \M_n \circ \ldots \M_1 u(x)
\end{align*}
Thus, for all $u \in C_b(\R^n) \cap L^p(\R^n)$ we have
\[
\Vert \Ncal u \Vert_{L^p} \leq C(p)^n \Vert u \Vert_{L^p}
\]
As $C(p) \leq  \frac{Cp}{p-1}$, the second assertion is proven.

The last statement is a rather standard analogue of the statement for the maximal function $\M$. As $u \colon L^p \to L^p$ is bounded it is also bounded from $L^p$ to the weak space $L^{p,\infty}$. Thus, for any function $v$ we have
\[
\LL^n(\{ \Ncal v \geq \lambda\} ) \leq \frac{C(\varepsilon)}{\lambda^{(1+\varepsilon)}} \int \vert v \vert^{1+\varepsilon} \dz.
\]
We apply this inequality to the function
\[
    v = u_{\lambda} = \begin{cases}
        \vert u \vert -\tfrac{\lambda}{2} & \text{if } \vert u \vert > \tfrac{\lambda}{2}, \\
        0 & \text{else.}
    \end{cases}
\]
Due to sublinearity, we have $\{\Ncal u \geq \lambda\} \subset \{\Ncal u_{\lambda} \geq \lambda/2\}$ and therefore
\begin{align*}
\LL^n( \{ \Ncal u \geq \lambda \}) &\leq \LL^n( \Ncal u_{\lambda} \geq \tfrac{\lambda}{2}) \\
&\leq \frac{C(\varepsilon)}{\lambda^{(1+\varepsilon)}} \int_{\{\vert u \vert \geq \lambda/2\}} \vert u \vert^{(1+\varepsilon)} \dx.
\end{align*}
\end{proof}
In Lemma \ref{lemma:extension} we have shown by elementary means that the concept of extension of Lipschitz functions also applies to the present definition of asymmetric Lipschitz continuity. In \cite{AF84,Zhang} it is shown that $W^{1,1}$- functions are Lipschitz continuous on sublevel sets of the maximal function. The following lemma provides the counterpart to this statement for asymmetric Lipschitz continuity. As not all coordinate directions can be treated equally, we need the modified maximal function $\Ncal$:
\begin{lemma} \label{lemma:AF}
    Suppose that $u \in W^{1,1}_{loc}(\R^n;\R)$. There exists a dimensional constant $C=C(n)$ such that for all $\lambda,\mu >0$ and 
    \[
    x,y \in \{ \Ncal (\nabla u)_+ \leq \lambda\} \cap \{ \Ncal(\nabla u)_- \leq \mu \} \eqqcolon X
    \]
    is asymmetrically $(C \lambda, C \mu)$-Lipschitz continuous on $X$.
\end{lemma}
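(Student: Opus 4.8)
The plan is to reprove, in an anisotropic and sign-sensitive form, the classical fact that a Sobolev function is Lipschitz on sublevel sets of a maximal function of its gradient; the point is that the modified maximal function $\Ncal$ controls averages over \emph{arbitrary} coordinate boxes centred at a point, not just over cubes. First I note that every point of $X$ is a Lebesgue point of $u$: on $X$ one has $\M(\nabla u)\le C(\lambda+\mu)<\infty$ (since $\Ncal$ dominates $\M$ up to a dimensional constant and $|\nabla u|\le|(\nabla u)_+|+|(\nabla u)_-|$), and points where $\M(\nabla u)<\infty$ are Lebesgue points of the precise representative of $u$. Fix $x,y\in X$, write $h=x-y$, and recall that we must produce a dimensional $C$ with $u(x)-u(y)\le\sum_{l=1}^n d_{\lambda,\mu}(h_l)=d_{C\lambda,C\mu}(x,y)$ (the reverse inequality then follows by swapping $x$ and $y$). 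I first treat the generic case $h_l\neq0$ for every $l$ and indicate the routine modification for vanishing components at the end.

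For $j\ge0$ let $B^x_j$ be the closed box with one vertex at $x$ whose $l$-th edge has endpoints $x_l$ and $x_l-2^{-j}h_l$ (so $B^x_j$ grows out of $x$ towards $y$ in each coordinate), and let $B^y_j$ be defined symmetrically with $x$ and $y$ interchanged. Then $B^x_0=B^y_0$ is the box with opposite vertices $x$ and $y$; the boxes $B^x_j$ (resp.\ $B^y_j$) decrease to $\{x\}$ (resp.\ $\{y\}$) with fixed eccentricity, so $\fint_{B^x_j}u\to u(x)$ and $\fint_{B^y_j}u\to u(y)$, and telescoping (using $B^x_0=B^y_0$) yields
\[
u(x)-u(y)=\sum_{j\ge0}\Bigl(\fint_{B^x_{j+1}}u-\fint_{B^x_j}u\Bigr)+\sum_{j\ge0}\Bigl(\fint_{B^y_{j}}u-\fint_{B^y_{j+1}}u\Bigr).
\]
It therefore suffices to bound each bracketed difference above by $C(n)\,2^{-j}\sum_l d_{\lambda,\mu}(h_l)$.

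To estimate $\fint_{B^x_{j+1}}u-\fint_{B^x_j}u$, I interpolate $B^x_j=B^{(0)}\supset B^{(1)}\supset\dots\supset B^{(n)}=B^x_{j+1}$, where $B^{(l)}$ carries the halved $l'$-th edge for $l'\le l$; by Fubini $\fint_{B^{(l)}}u-\fint_{B^{(l-1)}}u$ is the average over the unchanged coordinates of a one-dimensional quantity $\fint_{I/2}g-\fint_I g$, where $I$ has length $2^{-j}|h_l|$ with an endpoint at $x_l$, $I/2$ is the half of $I$ adjacent to $x_l$, and $g$ is $u$ restricted to the relevant line (absolutely continuous on a.e.\ line since $u\in W^{1,1}_{\mathrm{loc}}$). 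A one-line computation — shift $I$ by $|I|/2$ and apply the fundamental theorem of calculus — shows that this quantity is $\le\frac12\int_I(\partial_l u)_+$ if $h_l>0$ and $\le\frac12\int_I(\partial_l u)_-$ if $h_l<0$; the essential gain over the classical argument is that the one-sidedness of $I$ at $x_l$ makes exactly one of $(\partial_l u)_\pm$ appear, and it is the one pairing with $d_{\lambda,\mu}(h_l)$. Integrating back over the frozen coordinates and using $B^{(l-1)}\subset Q_{\vec r}(x)$ with $r_{l'}=2^{-j}|h_{l'}|$ and $|Q_{\vec r}(x)|\le 2^{2n}|B^{(l-1)}|$, together with $\fint_{Q_{\vec r}(x)}(\partial_l u)_+\le\Ncal(\nabla u)_+(x)\le\lambda$ and $\fint_{Q_{\vec r}(x)}(\partial_l u)_-\le\mu$ (because $x\in X$), one gets $\fint_{B^{(l)}}u-\fint_{B^{(l-1)}}u\le C(n)\,2^{-j}d_{\lambda,\mu}(h_l)$. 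Summing over $l$, then the geometric series over $j$, bounds the first bracket by $C(n)\sum_l d_{\lambda,\mu}(h_l)$; the $B^y$-terms are handled identically, now invoking $y\in X$, and adding the two gives the claim with a dimensional constant $C(n)=2^{O(n)}$. For components with $h_l=0$ one instead gives the $l$-th edge of $B^x_j$ and $B^y_j$ length $\delta2^{-j}$ centred at $x_l=y_l$; these coordinates then contribute at most $C(n)\delta(\lambda+\mu)$ in total, which is removed by letting $\delta\to0$.

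The crux — and the only genuinely delicate step — is this one-dimensional estimate together with the bookkeeping around it: the dyadic boxes must be arranged so that at every scale and in every coordinate the interval being refined is pinned to $x_l$ (resp.\ $y_l$) and shrinks towards it, since this is exactly what lets the fundamental theorem of calculus produce $(\partial_l u)_+$ or $(\partial_l u)_-$ alone rather than $|\partial_l u|$, and hence what upgrades the usual symmetric conclusion $|u(x)-u(y)|\lesssim(\lambda+\mu)\|x-y\|_1$ to the asymmetric $d_{C\lambda,C\mu}(x,y)$. Keeping these boxes one-sided at $x$ while still comparable in measure to coordinate boxes \emph{centred} at $x$ is also precisely where the non-standard operator $\Ncal$, rather than the usual maximal function $\M$, is indispensable.
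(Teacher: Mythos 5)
Your proof is correct, and it shares the paper's outer structure: in both arguments the point is to compare $u(x)$ and $u(y)$ to the average $(u)_Q$ over the box $Q$ with opposite vertices $x$ and $y$, because in this box the sign of the displacement in each coordinate is fixed, which is exactly what lets the pinned-at-$x$ (resp.\ pinned-at-$y$) average pick out only $(\partial_l u)_+$ or $(\partial_l u)_-$ and hence be controlled by $\Ncal(\nabla u)_\pm$. Where you diverge is in how this comparison is carried out. The paper writes $(u)_Q-u(y)=\fint_Q\int_0^1\nabla u(y+t(z-y))\cdot(z-y)\,\dt\dz$ and then rescales, so the estimate is a single continuous integral in $t$ over dilates $y+t(Q-y)$ of the pinned box, each compared to $\Ncal$ at $y$; you instead telescope $(u)_Q-u(y)$ over a dyadic family of nested boxes pinned at $y$ and reduce each scale, coordinate by coordinate, to a one-dimensional FTC gain of one sign only. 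Your version is an anisotropic adaptation of the classical Acerbi--Fusco dyadic argument, whereas the paper's is a Morrey-type scaling argument; both deliver the same $C(n)$-Lipschitz bound, with yours being slightly longer but arguably more elementary (no continuous family of dilated domains and no Fubini-in-$t$), and you also verify explicitly that every point of $X$ is a Lebesgue point, a step the paper leaves implicit.
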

\begin{proof}
    Observe that the set $X$ is a closed set and suppose that $x,y$ are Lebesgue points of the function $u$. We further suppose that $x_i \neq y_i$ for any index $i$ and define $r_i \coloneqq \vert x_i -y_i \vert$ (In the case where $x_i=y_i$ for some $i$, the proof proceeds in the same fashion if we choose $r_i = \delta$ for an arbitrarily small $\delta >0$. The observation \eqref{psiops} below then may not be valid, but $(x_i-y_i)=0$ and both other terms are rather small).

    We define $Q_1 = Q_{r_1,\ldots,r_n}(x)$, $Q_2 = Q_{r_1,\ldots,r_n}(y)$ and $Q = Q_1 \cap Q_2$. Observe that for any $z \in Q$ we have that
    \begin{equation} \label{psiops}
    (x_i-z_i), \quad (z_i-y_i), \quad (x_i-y_i)
    \end{equation}
    have the same sign. We further write $(u)_Q = \fint_Q u$.
    We aim to show
    \begin{enumerate} [label=(\roman*)]
        \item \label{lemma:AF:item:1} $(u)_Q - u(y) \leq C d _{\lambda,\mu}(x,y)$;
        \item $ u(x) -(u)_Q \leq C d_{\lambda,\mu}(x,y)$.
    \end{enumerate}
    We only show the first assertion, the proof for the second is quite similar.
    We use the Fundamental Theorem of Calculus and obtain
    \begin{align*}
        (u)_Q -u(y) &= \fint_Q u(z) -u(y) \dz \\
        &= \fint_Q \int_0^1 \nabla u(y+t(z-y)) \cdot (z-y) \dt \dz \\
        &= \tfrac{1}{\LL^n(Q)} \int_0^1 \tfrac{1}{t^n} \int_{t Q(0)} \nabla u(y+tz) \cdot  \tfrac{z}{t} \dz \dt = (\ast)
    \end{align*}
    Here, $tQ(0)$ denotes the cube $Q$ translated by $-y$ and rescaled by $t$. Further, recall that $\tfrac{z_i}{t}$ has the same sign as $(x-y)_i$. Therefore,
    \begin{align*}
        (\ast) & \leq \int_0^1 \fint_{tQ(0)} \nabla u(x+z) \cdot (x_i-y_i) \dx \dt 
     \\
     & \leq \int_0^1 \sum_{i=1}^n \begin{cases}
         \Ncal (\partial_i u)_+ (x_i-y_i) & \text{if } y_i-x_i \geq 0 \\
         -\Ncal (\partial_i u)_- (x_i-y_i) & \text{if } y_i-x_i < 0
     \end{cases} \dt
     \\
     & \leq \int_0^1 C d_{\lambda,\mu}(x,y) \dt = C d_{\lambda,\mu}(x,y).
    \end{align*}
    By triangle inequality we then can show that
    \[
    u(x)-u(y) \leq C d_{\lambda,\mu}(x,y)
    \]
    and therefore $u$ is asymmetrically Lipschitz continuous on the set $X$.
\end{proof}

As a simple consequence we then obtain the following truncation statement:
\begin{coro}
    Suppose that $u \in W^{1+\varepsilon}(\R^n;\R)$. For any $\lambda,\mu>0$ there exists a function $u_{\lambda,\mu} \in W^{1,\infty}(\R^n;\R)$ such that 
    \begin{enumerate} [label=(T\arabic*)]
        \item \label{T1} $\Vert (\nabla u_{\lambda,\mu})_+ \Vert_{L^{\infty}} \leq C \lambda$;
        \item \label{T2} $\Vert (\nabla u_{\lambda,\mu})_- \Vert_{L^{\infty}} \leq C \mu$;
        \item \label{T3} The set $\{\nabla u \neq \nabla u_{\lambda,\mu} \} \subset \{ u \neq u_{\lambda,\mu} \}$ is contained in the set $\{ \Ncal (\nabla u)_+ \geq \lambda\} \cup \{ \Ncal (\nabla u)_- \geq \mu\}$.
        \item \label{T4} Consequently, there is a dimensional constant $C(\varepsilon)= (C(n) \varepsilon^{-n} (1+\varepsilon)^n)^{1+\varepsilon}$ such that 
        \[
        \LL^n(\{ u \neq u_{\lambda,\mu}\}) \leq \frac{C(\varepsilon)}{\lambda^{1+\varepsilon}} \int_{\{ \vert (\nabla u)_+ \vert \geq \lambda/2 \}} \vert (\nabla u)_+ \vert^{1+\varepsilon} \dx  + \frac{C(\varepsilon)}{\mu^{1+\varepsilon}} \int_{\{ \vert (\nabla u)_- \vert \geq \mu/2 \}} \vert (\nabla u)_- \vert^{1+\varepsilon}\dx.
        \]
    \end{enumerate}
\end{coro}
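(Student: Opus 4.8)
The plan is to derive the corollary directly from Lemmas~\ref{lemma:extension} and~\ref{lemma:AF} together with the boundedness/level-set estimate for $\Ncal$ in Lemma~\ref{lemma:MF}. First I would fix $\lambda,\mu>0$ and define the "good set"
\[
X \coloneqq \{\Ncal(\nabla u)_+ \leq \lambda\} \cap \{\Ncal(\nabla u)_- \leq \mu\},
\]
which is closed by lower semicontinuity of $\Ncal$. On $X$, Lemma~\ref{lemma:AF} tells us that $u$ (after modification on a null set so that every point of $X$ is a Lebesgue point — here I would use that $\Ncal(\nabla u)_\pm < \infty$ on $X$ forces the pointwise values to be the precise representative) is asymmetrically $(C\lambda, C\mu)$-Lipschitz continuous. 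Then Lemma~\ref{lemma:extension} provides an extension $u_{\lambda,\mu}\colon\R^n\to\R$ that agrees with $u$ on $X$ and is globally $(C\lambda,C\mu)$-Lipschitz. The asymmetric Lipschitz bound, rewritten in the form $u_{\lambda,\mu}(x)-u_{\lambda,\mu}(y)\leq d_{C\lambda,C\mu}(x,y)$ for all $x,y$, yields by the standard difference-quotient argument that $u_{\lambda,\mu}\in W^{1,\infty}$ with $(\partial_i u_{\lambda,\mu})_+ \leq C\lambda$ and $(\partial_i u_{\lambda,\mu})_- \leq C\mu$ a.e.; taking the componentwise maximum over $i$ gives \ref{T1} and \ref{T2}.

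For \ref{T3}, the inclusion $\{u\neq u_{\lambda,\mu}\}\subset X^c = \{\Ncal(\nabla u)_+ > \lambda\}\cup\{\Ncal(\nabla u)_- > \mu\}$ is immediate since $u=u_{\lambda,\mu}$ on $X$ (up to choosing the closed set $\{\Ncal(\nabla u)_+\geq\lambda\}\cup\{\Ncal(\nabla u)_-\geq\mu\}$, which contains $X^c$, to get the non-strict inequalities as stated). The first inclusion $\{\nabla u\neq\nabla u_{\lambda,\mu}\}\subset\{u\neq u_{\lambda,\mu}\}$ holds up to a null set because two Sobolev functions that coincide on a measurable set have equal gradients a.e.\ on that set (Lebesgue density / the a.e.\ differentiability argument of \cite{AF84}). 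Finally, \ref{T4} follows by applying item (iii) of Lemma~\ref{lemma:MF} separately to $(\nabla u)_+$ with level $\lambda$ and to $(\nabla u)_-$ with level $\mu$, and summing:
\[
\LL^n(\{u\neq u_{\lambda,\mu}\}) \leq \LL^n(\{\Ncal(\nabla u)_+ \geq \lambda\}) + \LL^n(\{\Ncal(\nabla u)_- \geq \mu\}),
\]
with each term estimated by the corresponding weak-type bound; tracking the constant from Lemma~\ref{lemma:MF}(ii)–(iii) (which is $c(p/(p-1))^n$ with $p=1+\varepsilon$) produces exactly $C(\varepsilon)=(C(n)\varepsilon^{-n}(1+\varepsilon)^n)^{1+\varepsilon}$.

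The one genuine subtlety — the main "obstacle," though it is minor — is the passage from the asymmetric Lipschitz estimate on the closed set $X$ to the claimed $W^{1,\infty}$ bounds \emph{with the correct orientation of signs}: one must check that $d_{\lambda,\mu}$-Lipschitz continuity really does translate into one-sided bounds $(\partial_i u_{\lambda,\mu})_+\leq C\lambda$ and $(\partial_i u_{\lambda,\mu})_- \leq C\mu$ rather than just the symmetric bound $|\nabla u_{\lambda,\mu}|\leq C\max(\lambda,\mu)$. This is where the remark following the definition of asymmetric Lipschitz continuity (that swapping $x$ and $y$ gives the complementary one-sided bound) is used: testing the inequality with $y=x+te_i$, $t>0$, and letting $t\downarrow 0$ controls $\partial_i u_{\lambda,\mu}(x)$ from above by $C\lambda$; testing with $y=x-te_i$ controls it from below by $-C\mu$. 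Everything else is a routine assembly of the three preceding lemmas, so I would keep the write-up short and simply cite them at each step.
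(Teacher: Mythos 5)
Your proposal follows exactly the route the paper takes: take the good set $X=\{\Ncal(\nabla u)_+\leq\lambda\}\cap\{\Ncal(\nabla u)_-\leq\mu\}$, invoke Lemma~\ref{lemma:AF} for asymmetric Lipschitz continuity of $u$ on $X$, extend via Lemma~\ref{lemma:extension}, and use Lemma~\ref{lemma:MF}(iii) for the measure estimate in~\ref{T4}. The additional care you take (precise representatives on $X$, the one-sided difference-quotient argument for \ref{T1}--\ref{T2}, and the constant tracking $C(\varepsilon)=(C(n)\varepsilon^{-n}(1+\varepsilon)^n)^{1+\varepsilon}$) is consistent with, and merely fleshes out, the paper's proof sketch.
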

\begin{proof}[Proof(Sketch):]
This follows by combining the previous Lemmas \ref{lemma:extension}, \ref{lemma:MF} and \ref{lemma:AF}. In particular, we leave $u$ untouched on the set $X = \{ \Ncal (\nabla u)_+ \leq \lambda \} \cap \{\Ncal (\nabla u)_- \leq \mu\}$. Due to Lemma \ref{lemma:AF} the function $u$ is $(C\lambda,C\mu)$-Lipschitz continuous on this set. We then can extend the function to the entirety of $\R^n$ by Lemma \ref{lemma:extension}. This shows \ref{T1}--\ref{T3}. The estimate of \ref{T4} then is shown by the last item of Lemma \ref{lemma:MF}.
\end{proof}

For the proof of Theorem \ref{thm:main3} we also need a version that preserves zero boundary values for a domain. For usual Lipschitz truncation of a function $u \in W^{1,1}_0(\Omega)$ a version of this statement can, for instance, be found in \cite{FJM}. The assumption there is that the boundary of the set $\Omega$ is Lipschitz (the existence of an exterior cone at any $x \in \Omega$ is enough). At least with the technique of proof displayed in Lemma \ref{lemma:AF} such a general assumption is seems impossible. We therefore additionally assume that $\Omega$ is convex (which, in a way, is an 'exterior cone' condition with the cone being a half-space).

\begin{lemma} \label{lemma:AF2}
     Suppose that $\Omega$ is a convex domain and $u \in W^{1,1}(\Omega;\R)$. There exists a dimensional constant $C=C(n)$ such that for all $\lambda,\mu >0$ and 
    \[
    x,y \in \{ \Ncal (\nabla u)_+ \leq \lambda\} \cap \{ \Ncal(\nabla u)_- \leq \mu \} \eqqcolon X
    \]
    is asymmetrically $(C \lambda, C \mu)$-Lipschitz continuous on $Y:=(X \cap \Omega) \cup \partial{\Omega}$.
\end{lemma}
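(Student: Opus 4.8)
\subsection*{Proof proposal}

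The plan is to establish \eqref{def:asLip} separately for the three kinds of pairs in $Y\times Y$, where throughout we regard $u$ as extended by $0$ outside $\Omega$ (so that, as in the application, $u\in W^{1,1}_0(\Omega)$ and the extension lies in $W^{1,1}(\R^n)$) and we set $u:=0$ on $\partial\Omega$; exactly as in the proof of Lemma~\ref{lemma:AF} it is enough to verify the inequality when the interior point involved is a Lebesgue point of $u$, the general statement following from the continuous‑extension argument of Lemma~\ref{lemma:extension}. If $x,y\in X\cap\Omega$ are both (Lebesgue) points, then $X\cap\Omega\subseteq X$ and the estimate is precisely Lemma~\ref{lemma:AF}. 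If $x,y\in\partial\Omega$, it is trivial, since $u(x)-u(y)=0\le d_{\lambda,\mu}(x,y)$. Hence everything rests on the mixed case: $x\in X\cap\Omega$ a Lebesgue point of $u$ and $y\in\partial\Omega$ (the opposite arrangement being symmetric), where we must show $u(x)\le C\,d_{\lambda,\mu}(x,y)$ and $-u(x)\le C\,d_{\lambda,\mu}(y,x)$.

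The idea is to run the change‑of‑variables estimate of Lemma~\ref{lemma:AF} \emph{not} between $x$ and $y$, but between $x$ and a carefully placed axis‑parallel box $Q$ on which $u$ has zero mean; convexity enters at exactly this step. Let $\nu$ be the outer unit normal of a supporting hyperplane of $\Omega$ at $y$, so that $\Omega\subseteq\{z:(z-y)\cdot\nu<0\}$ and hence $u\equiv 0$ a.e.\ on $\{z:(z-y)\cdot\nu\ge 0\}$; in particular $(u)_Q=0$ for every box $Q$ contained in that half‑space. Write $\rho_i:=|x_i-y_i|$ and $\sigma_i:=\sgn(y_i-x_i)$, and call the coordinate $i$ \emph{good} if $\sigma_i\nu_i\ge 0$ and \emph{bad} otherwise. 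Set $g_i:=2\rho_i$ for good $i$ and $g_i:=\rho_i/4$ for bad $i$, and let $Q$ be the box whose $i$-th edge is $x_i+\sigma_i[g_i,2g_i]$, i.e.\ the interval of length $g_i$ lying on the $\sigma_i$-side of $x_i$ and starting at distance $g_i$ from it. (Coordinates with $\rho_i=0$ are treated, as in Lemma~\ref{lemma:AF}, by using an auxiliary width $\delta>0$ there and letting $\delta\downarrow 0$ at the end.)

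The point of this choice is that $Q$ avoids $\Omega$ while staying, in each coordinate, comparable to the box with opposite corners $x$ and $y$. Indeed, minimising coordinate by coordinate gives $\min_{z\in Q}(z-x)\cdot\nu=\sum_{\mathrm{good}}g_i|\nu_i|-2\sum_{\mathrm{bad}}g_i|\nu_i|$, while $(y-x)\cdot\nu=\sum_{\mathrm{good}}\rho_i|\nu_i|-\sum_{\mathrm{bad}}\rho_i|\nu_i|$; subtracting and inserting the values of $g_i$ yields $\min_{z\in Q}(z-y)\cdot\nu\ge\sum_{\mathrm{good}}\rho_i|\nu_i|+\tfrac12\sum_{\mathrm{bad}}\rho_i|\nu_i|$, which is strictly positive because it can only vanish when $(y-x)\cdot\nu=0$, impossible for $x\in\Omega$. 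So $Q\subseteq\{(z-y)\cdot\nu>0\}\subseteq\R^n\setminus\Omega$ and $(u)_Q=0$ (the degenerate coordinates shift this quantity by $O(\delta)$ only, so positivity persists once $\delta$ is small). Moreover $g_i\le 2\rho_i$, so each edge of $Q$ has length $g_i$ while the distance from $x$ to $Q$ in the $i$-th coordinate is at most $2g_i\le 4\rho_i$; in particular $Q$ is contained in a box centred at $x$ of volume at most $C(n)|Q|$, and lies entirely on the $\sigma_i$-side of $x_i$ in every coordinate.

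With $Q$ in hand one reruns Lemma~\ref{lemma:AF} verbatim: writing $u(x)-(u)_Q=\fint_Q\int_0^1\nabla u((1-t)z+tx)\cdot(x-z)\dt\dz$ and substituting $\xi=(1-t)z+tx$ gives $u(x)-(u)_Q=\int_0^1\tfrac1{1-t}\fint_{(1-t)Q+tx}\nabla u(\xi)\cdot(x-\xi)\,d\xi\dt$; on $(1-t)Q+tx$ the component $x_i-\xi_i$ has constant sign opposite to $\sigma_i$ and modulus at most $2(1-t)g_i$, so the $i$-th summand is bounded by $2(1-t)g_i$ times $(\partial_i u)_-(\xi)$ if $\sigma_i=+1$ and $(\partial_i u)_+(\xi)$ if $\sigma_i=-1$, and since $(1-t)Q+tx$ sits in a box centred at $x$ of comparable volume, its average of $(\partial_i u)_\mp$ is $\le C(n)\Ncal(\partial_i u)_-(x)\le\mu$ (resp.\ $\le C(n)\Ncal(\partial_i u)_+(x)\le\lambda$) because $x\in X$. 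The factors $1-t$ cancel, $2g_i\le 4\rho_i$, and $d_{\lambda,\mu}(x_i-y_i)$ equals $\mu\rho_i$ when $\sigma_i=+1$ and $\lambda\rho_i$ when $\sigma_i=-1$, whence $u(x)-(u)_Q\le C(n)\,d_{\lambda,\mu}(x,y)+O(\delta(\lambda+\mu))$; letting $\delta\downarrow0$ and using $(u)_Q=0$ gives $u(x)\le C(n)\,d_{\lambda,\mu}(x,y)$, and the reversed inequality is the twin computation for $(u)_Q-u(x)$. The main obstacle — and the only place convexity is used — is precisely the construction of $Q$: it must simultaneously miss $\Omega$ (so that $u$ averages to $0$ over it), lie in the coordinate orthant at $x$ pointing toward $y$ (so each partial derivative appears with the sign for which $\Ncal(\partial_i u)_\pm(x)$ is bounded), and have edge lengths comparable to $|x_i-y_i|$ (so that the bound reads $d_{\lambda,\mu}(x,y)$ and not something larger). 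The good/bad dichotomy is what reconciles these three demands, and contracting $Q$ in the bad directions is possible exactly because convexity furnishes a genuine exterior half‑space at $y$, not merely an exterior cone.
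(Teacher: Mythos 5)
Your proposal is correct and follows the same overall structure as the paper's sketch: reduce to the mixed boundary--interior case, use the supporting hyperplane at the boundary point to manufacture an axis-parallel box $Q$ outside $\Omega$ on which $(u)_Q=0$, and then rerun the Fundamental-Theorem-of-Calculus estimate of Lemma~\ref{lemma:AF} from the interior point to $Q$. The only genuine difference is in \emph{how} $Q$ is placed. The paper reflects the interior point $y$ through the boundary point $x$ to get $\bar x = 2x-y$, which automatically lies in the exterior half-space because $(\bar x-x)\cdot\nu = -(y-x)\cdot\nu > 0$, and then attaches a small box to $\bar x$ whose $i$-th edge is chosen (positive or negative interval of length $r_i/2$) simply by the sign of $\nu_i$; no rescaling of edge lengths is required because the reflection already supplies a uniform margin. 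You instead place $Q$ on the segment side of the interior point and compensate for the coordinates in which moving toward the boundary decreases $(z-y)\cdot\nu$ (your ``bad'' coordinates) by shrinking the corresponding edge length, which requires the explicit $g_i$-bookkeeping and the good/bad dichotomy. Both constructions produce a box of volume comparable to $\prod_i|x_i-y_i|$, contained in a box of comparable volume centred at the interior point (so that $\Ncal$ controls the averages), and positioned so that each $z_i-x_i$ has a fixed sign determined by $\sgn(y_i-x_i)$; both use convexity only through the existence of an exterior half-space. The paper's reflection is a touch more economical since the margin $(\bar x-x)\cdot\nu>0$ is free, whereas your scaling argument must verify the strict positivity of $\sum_{\text{good}}\rho_i|\nu_i|+\tfrac12\sum_{\text{bad}}\rho_i|\nu_i|$ separately, but the two are otherwise equivalent in content and in the constants they produce.
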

\begin{proof}[Proof (Sketch):]
Suppose that $x,y \in Y$. Our goal is to show property \eqref{def:asLip}. In view of Lemma \ref{lemma:AF}, if both $x,y \in (X \cap \Omega)$, the statement is proven. If $x,y \in \partial \Omega$, then the statement is immediate as $u(x)=u(y)=0$. So the only case of interest is when one of the points is in $X \cap \Omega$ and the other is in $\partial \Omega$.

As the proofs are very similar, let us assume that $x \in \partial \Omega$ and $y \in X \cap \Omega$. Let $\bar{x} =x +(x-y)$ and $r_i = \vert x_i -y_i \vert$. Due to convexity of the domain one may show that one  cube $Q$ of the form $Q = \bar{x} + I_1 \times \ldots \times I_n$ where $I_1 \in \{(0,r_i/2),(-r_1/2,0)\}$ (i.e. is either the positive or negative interval) is entirely in $\Omega^C$. As we may extend $u$ by $0$ to the entirety of $\Omega$, $(u)_Q =0 =u(\bar{x}) =u(x)$. We can now proceed similarly to item \ref{lemma:AF:item:1} in the proof of Lemma \ref{lemma:AF}.
\end{proof}

As a direct consequence, as before we also obtain.

\begin{coro} \label{coro:truncation:boundary}
    Suppose that $\Omega \subset \R^n$ is a convex domain. There is a dimensional constant $C =C(n)$ such that for any $u \in W_0^{1+\varepsilon}(\R^n;\R)$ and any $\lambda,\mu>0$ there exists a function $u_{\lambda,\mu} \in W_0^{1,\infty}(\Omega;\R)$ such that 
    \begin{enumerate} [label=(T\arabic*)]
        \item \label{T1} $\Vert (\nabla u_{\lambda,\mu})_+ \Vert_{L^{\infty}} \leq C \lambda$;
        \item \label{T2} $\Vert (\nabla u_{\lambda,\mu})_- \Vert_{L^{\infty}} \leq C \mu$;
        \item \label{T3} The set $\{\nabla u \neq \nabla u_{\lambda,\mu} \} \subset \{ u \neq u_{\lambda,\mu} \}$ is contained in the set $\{ \Ncal (\nabla u)_+ \geq \lambda\} \cup \{ \Ncal (\nabla u)_- \geq \mu\}$.
        \item \label{T4} Consequently, there is a dimensional constant $C=C(n,\varepsilon)$ such that 
        \[
        \LL^n(\{ u \neq u_{\lambda,\mu}\}) \leq \frac{C}{\lambda^{1+\varepsilon}} \int_{\{ \vert (\nabla u)_+ \vert \geq \lambda/2} \vert (\nabla u)_+ \vert^{1+\varepsilon} \dx  + \frac{C}{\mu^{1+\varepsilon}} \int_{\{ \vert (\nabla u)_- \vert \geq \mu/2} \vert (\nabla u)_- \vert^{1+\varepsilon}\dx.
        \]
    \end{enumerate}
\end{coro}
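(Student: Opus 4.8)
The plan is to run the argument of the preceding corollary almost verbatim, replacing Lemma~\ref{lemma:AF} by Lemma~\ref{lemma:AF2} and inserting one extra step to keep the truncation vanishing outside $\Omega$. Writing $u$ also for the zero extension of the given $u\in W_0^{1,1+\varepsilon}(\Omega)$, so that $u\in W^{1,1+\varepsilon}(\R^n)$, set
\[
X:=\{\Ncal(\nabla u)_+\le\lambda\}\cap\{\Ncal(\nabla u)_-\le\mu\},
\]
which is closed since $\Ncal(\nabla u)_\pm$ are lower semicontinuous, and let $Z:=(X\cap\Omega)\cup(\R^n\setminus\Omega)$, which is closed because its complement $\Omega\cap X^{c}$ is open. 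The first real task is to show that $u$ is asymmetrically $(C\lambda,C\mu)$-Lipschitz continuous on $Z$, i.e.\ $u(a)-u(b)\le C\,d_{\lambda,\mu}(a,b)$ for all $a,b\in Z$. If $a,b\in X\cap\Omega$ this is Lemma~\ref{lemma:AF}; if $a,b\in\R^n\setminus\Omega$ it is immediate since $u(a)=u(b)=0$ and $d_{\lambda,\mu}\ge 0$; if one point lies in $X\cap\Omega$ and the other on $\partial\Omega$ it is Lemma~\ref{lemma:AF2}. In the remaining case $a\in X\cap\Omega$, $b\in\R^n\setminus\overline\Omega$ (the opposite ordering is handled the same way), convexity of $\Omega$ lets the segment $[a,b]$ leave $\Omega$ at a point $z=a+t(b-a)\in\partial\Omega$ with $t\in(0,1)$, so that $u(z)=0=u(b)$; since $d_{\lambda,\mu}$ is positively $1$-homogeneous and $a-z=t(a-b)$, Lemma~\ref{lemma:AF2} applied to $a,z$ yields
\[
u(a)-u(b)=u(a)-u(z)\le C\,d_{\lambda,\mu}(a,z)=Ct\,d_{\lambda,\mu}(a,b)\le C\,d_{\lambda,\mu}(a,b).
\]

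With the good set $Z$ at hand I would apply the extension Lemma~\ref{lemma:extension} to obtain $u_{\lambda,\mu}\colon\R^n\to\R$ that is $(C\lambda,C\mu)$-Lipschitz and agrees with $u$ on $Z$; in particular $u_{\lambda,\mu}\equiv 0$ on $\R^n\setminus\Omega$, so $u_{\lambda,\mu}\in W_0^{1,\infty}(\Omega)$. Testing the defining inequality \eqref{def:asLip} along $y=x\pm h e_i$, $h\downarrow 0$, and using Rademacher's theorem gives $-C\mu\le\partial_i u_{\lambda,\mu}\le C\lambda$ a.e., which is \ref{T1} and \ref{T2}. Since $u_{\lambda,\mu}=u$ on $Z$, one has $\{u\neq u_{\lambda,\mu}\}\subset\R^n\setminus Z=\Omega\cap X^{c}\subset\{\Ncal(\nabla u)_+\ge\lambda\}\cup\{\Ncal(\nabla u)_-\ge\mu\}$, and $\{\nabla u\neq\nabla u_{\lambda,\mu}\}\subset\{u\neq u_{\lambda,\mu}\}$ up to a Lebesgue null set, which is \ref{T3}. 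Finally \ref{T4} follows from $\LL^n(\{u\neq u_{\lambda,\mu}\})\le\LL^n(\{\Ncal(\nabla u)_+\ge\lambda\})+\LL^n(\{\Ncal(\nabla u)_-\ge\mu\})$ combined with Lemma~\ref{lemma:MF}(iii) applied to $(\nabla u)_+$ with parameter $\lambda$ and to $(\nabla u)_-$ with parameter $\mu$.

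I expect the only point that needs genuine care to be the enlargement of the good set to $Z$, that is, checking that the estimate of Lemma~\ref{lemma:AF2} survives when the whole open exterior $\R^n\setminus\overline\Omega$ is adjoined with the value $0$. This is exactly where convexity of $\Omega$ enters — the segment from an interior point to an exterior point meets $\partial\Omega$ — together with the positive $1$-homogeneity and the triangle inequality of $d_{\lambda,\mu}$; everything else is the same bookkeeping as in the non-boundary corollary.
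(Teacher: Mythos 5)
Your proof is correct and takes the same route the paper implicitly intends: combine Lemma~\ref{lemma:AF2} with the extension Lemma~\ref{lemma:extension} and the maximal-function bound of Lemma~\ref{lemma:MF}. The one genuine detail you supply that the paper's terse ``as before'' glosses over is the case $a\in X\cap\Omega$, $b\in\R^n\setminus\overline\Omega$ --- Lemma~\ref{lemma:AF2} only covers $(X\cap\Omega)\cup\partial\Omega$ --- and your handling of it (intersecting the segment with $\partial\Omega$ and invoking convexity together with the positive $1$-homogeneity of $d_{\lambda,\mu}$) is exactly what is needed to ensure the extended function vanishes on all of $\R^n\setminus\Omega$ rather than merely on $\partial\Omega$.
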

\section{Higher integrability of the determinant} \label{sec:determinant}
This section is devoted to showing both Theorems \ref{thm:main1} and \ref{thm:main2}. As mentioned before we assume that 
\begin{itemize}
    \item $F$ is a quasiconcave function with $p$-growth, that is 
    \[
    \vert F(v) \vert \leq C(1+ \vert v \vert^p);
    \]
    \item $F(0) =0$;
    \item $u \in L^1(\R^n;\R^n)$ has compact support.    
\end{itemize}
Although the statements of Theorems \ref{thm:main1} and \ref{thm:main2} are almost identical (up to the sign of the exponent $\alpha$ of the logarithm), their proof differs quite a bit.
\begin{proof}[Proof of Theorem \ref{thm:main1}] For $\lambda >0$ let $u_{\lambda}$ be the function obtained by Theorem \ref{thm:LT}, i.e. $\Vert \nabla u_{\lambda} \Vert_{L^{\infty}} \leq c'(n) \lambda$ and $\{ u \neq u_{\lambda}\} \subset \{ \M(\nabla u) > \lambda\}$. Observe that for all $\lambda>1$,  $u_{\lambda}$ are uniformly compactly supported.

We demonstrate the proof first whenever $\alpha \geq 0$-the case $\alpha <0$ needs a slight adaptation that is possible, but superfluous for $\alpha \geq 0$. Observe that then $\nabla u \in L^p$ and we can directly use quasiconcavity of $F$. We therefore have
\begin{equation} \label{quasiconcavity:thm1}
\begin{split}
0 &\geq \int_{\R^n} F(\nabla u - \nabla u_{\lambda}) \dx \\
 & = \int_{\{ \M \nabla u > \lambda\}}F(\nabla u) - (F(\nabla u) - F(\nabla u -\nabla u_{\lambda})) \dx = (\ast).
 \end{split}
\end{equation}
As $F$ is quasiconcave (i.e. $-F$ is quasiconvex) it is locally Lipschitz continuous and therefore
\begin{align*}
    (\ast) \geq \int_{\{ \M \nabla u > \lambda\}} F(\nabla u) - C \lambda (\lambda^{p-1} + \vert \nabla u \vert^{p-1}) \dx.
\end{align*}
Consequently,
\begin{equation} \label{eq:consequently}
\int_{\{ \M \nabla u > \lambda\}} F(\nabla u) \dx \leq C \lambda \int_{\{ \M \nabla u > \lambda\}} \vert \nabla u \vert^{p-1} + \lambda^{p-1} \dx. 
\end{equation}
We integrate this inequality in $\lambda$ with an additional weigth of $(1+\lambda)^{-1} \log(1+\lambda)^{\alpha}$ and split the integral on the left-hand-side into its positive and its negative part.
This yields:
\begin{equation} \label{eq:main1:proof:1}
\begin{split}
\int_1^{\infty} &(1+\lambda^{-1}) \log(1+\lambda)^{\alpha}   \int_{\{\M \nabla u > \lambda\}} F_+(\nabla u) \dx \dlambda  \\
&\leq  C \int_1^{\infty} \log(1+\lambda)^{\alpha} \int_{\{ \M \nabla u > \lambda\}} \vert \nabla u \vert^{p-1} + (1+\lambda)^{p-1} \dx \dlambda
\\
& +\int_1^{\infty} (1+\lambda^{-1}) \log(1+\lambda)^{\alpha}   \int_{\{\M \nabla u > \lambda\}} F_-(\nabla u) \dx \dlambda.
\end{split}
\end{equation} 
We now estimate both sides by using Fubini's theorem:  For the left-hand-side we obtain (as $\alpha > -1$)
\begin{align*}
    \int_1^{\infty}& (1+\lambda^{-1}) \log(1+\lambda)^{\alpha}  \int_{\{\M \nabla u > \lambda\}} F_+(\nabla u) \dx \dlambda
    \\
    &= \int_{\{ \M \nabla u> 1\}} F_+(\nabla u) \int_1^{\M \nabla u} (1+\lambda)^{-1} \log(1+\lambda)^{\alpha} \dlambda \dx \\
     &=\int_{\{\M \nabla u \geq 1\} } F_+(\nabla u) \frac{1}{\alpha+1}  (\log(1+\M(\nabla u))^{\alpha +1}-\log(2)^{\alpha+1}) \dx \\
    &\geq  \int_{\{\M \nabla u \geq z_{\alpha} \} } F_+(\nabla u) \frac{1}{2(\alpha+1)} \log(1+\M(\nabla u))^{\alpha +1}   \dx -C,
\end{align*}
where $z_{\alpha}$ is such that $\log(1+z_{\alpha})^{\alpha+1} \geq 2\log(2)^{\alpha+1}$ (so that we can, up to loss of constant, absorb $\log(2)^{\alpha+1}$ into the first term). The constant $C$ then comes from the integral
\[
\int_{\{ 1 < \M \nabla u < z_{\alpha} \}}  F_+(\nabla u) \frac{1}{\alpha+1}  (\log(1+\M(\nabla u))^{\alpha +1}-\log(2)^{\alpha+1}) \dx.
\]
The second term on the right hand side can be estimated from above by splitting the integral into a term where $F_-$ is large and a term where it is controlled
\[
\int_{1}^{\infty} (1+\lambda)^{-1} \log(1+\lambda)^{\alpha} \int_{\{F_-(\nabla u) \geq \lambda^p\}} F_-(\nabla u) \dx \dlambda + \int_{1}^{\infty} (1+\lambda)^{\alpha} \log(1+\lambda)^{-1} \int_{\{\M u >\lambda\}} \lambda^p \dx \dlambda.
\]
The second integral can, with change of constant, be absorbed into the first term on the right-hand-side of \eqref{eq:main1:proof:1}, whereas the second is estimated as before:
\begin{align*}
\int_{1}^{\infty} &(1+\lambda)^{-1} \log(1+\lambda)^{\alpha} \int_{\{F_-(\nabla u)\geq \lambda^p\}} F_-(\nabla u) \dx \dlambda \\
&= \int_{\{F_-(\nabla u) \geq 1\}} F_-(\lambda) \int_1^{F_-(\nabla u)^{1/p}} (1+\lambda)^{-1} (\log(1+F_-(\nabla))^{\alpha}
\\
& \leq \int_{\{F_-(\nabla u) \geq 1\}} \frac{1}{\alpha+1} \log(1+ F_-(\nabla u)^{1/p})^{\alpha + 1} \dx \\
& \leq \int_{F_-(\nabla u) \geq 1} \frac{1}{\alpha+1} \log(1+ F_-(\nabla u)) \dx.
\end{align*}
The first term on the right-hand-side of \eqref{eq:main1:proof:1} can instead be bounded by the following calculation:
\begin{align*}
    C& \int_1^{\infty} \log(1+\lambda)^{\alpha}  \int_{\{ \M \nabla u > \lambda\}} \vert \nabla u \vert^{p-1} + (1+\lambda)^{p-1} \dx \dlambda \\
& = C \int_{\{\M \nabla u > 1\}} \vert \nabla u \vert^{p-1} \int_1^{\M \nabla u} (\log(1+\lambda))^{\alpha} \dlambda + \int_1^{\M \nabla u} (\log(1+\lambda)^{\alpha} (1+\lambda)^{p-1} \dlambda \dx \\
& \leq (\ast)
\end{align*}

\noindent We can now estimate both integral using an upper bound for the primitive of the function. First, observe that 
\begin{equation} \label{estimatex}
\frac{d}{d\lambda} \left((1+\lambda) \log(1+\lambda))^{\alpha}\right) = \log(1+\lambda)^{\alpha} + \alpha \log(1+\lambda)^{\alpha-1},
\end{equation}
and that
\[
\frac{d}{d\lambda} \left((1+\lambda)^p \log(1+\lambda))^{\alpha}\right) =  p(1+\lambda)^{p-1}\log(1+\lambda)^{\alpha} + \alpha (1+\lambda)^{p-1}\log(1+\lambda)^{\alpha-1}.
\]
All summands in the corresponding sums are positive and we can therefore estimate 
\begin{align*}
(\ast) & \leq   C \int_{\{\M \nabla u > 1\}} \vert \nabla u \vert^{p} (\log(1+\M \nabla u))^{\alpha}  \dx \\
& \leq  C \int_{\{\M \nabla u > 1\}} \vert (\M \nabla u) \vert^{p} (\log(1+\M \nabla u))^{\alpha}  \dx.
\end{align*}
As the maximal function is bounded in spaces of type $L^p (\log L)^{\alpha}$ (cf. \cite{Hiro}), there is a constant $C(p,\alpha)$ such that
\begin{align*}
    (\ast) \leq  C(p,\alpha) \int_{\{\M \nabla u > 1\}} \vert (\nabla u) \vert^{p} (\log(1+\nabla u))^{\alpha}  \dx.
\end{align*}
Comparing now the left- and right-hand-side of equation of \eqref{eq:main1:proof:1} yields
\begin{align*}
\int_{\{\M \nabla u > z_{\alpha} \} } &F_+(\nabla u) \frac{1}{\alpha+1}  \log(1+(\nabla u))^{\alpha +1} \dx\\ &\leq C(p,\alpha) \int_{\{\M \nabla u > 1\}} \vert (\nabla u) \vert^{p} (\log(1+\nabla u))^{\alpha}  \dx  \\ 
&+
C(\alpha) \int_{\{F_-(\nabla u) > 1 \} } F_-(\nabla u) \frac{1}{\alpha+1}  \log(1+F_-(\nabla u))^{\alpha +1} \dx
\end{align*}
Observe that on the complement of $\{\M \nabla u >z_{\alpha}\}$ we know that $\vert \nabla u \vert \leq z_{\alpha}$, consequently $F_+(\nabla u) \in L^{\infty}$ on the complement of $\{\M \nabla u >z_{\alpha}\}$.

As $F$ has $p$-growth such that $\log(1+F(\nabla u))^{\alpha+1} \leq (p (c+\log(1+ \vert \nabla u \vert))$ we conclude \[
F(\nabla u) \cdot \log(1+ \vert \nabla u \vert)^{\alpha +1} \in L^1.
\]
\smallskip

For $-1 \leq \alpha < 0$ we need to adapt the proof as $(\nabla u) \notin L^p$) we cannot directly infer \eqref{quasiconcavity:thm1} (and for $\alpha = -1$ the proof afterwards also breaks down). Instead we consider two truncated versions of the functions, namely at level $\lambda$ ($u_{\lambda}$) and level $\lambda^2$ ($u_{\lambda^2}$). Observe that we can use quasiconcavity for the difference of $u_{\lambda}$ and $u_{\lambda^2}$:

\begin{equation} \label{quasiconcavity:2:thm1}
\begin{split}
0 &\geq \int_{\R^n} F(\nabla u_{\lambda^2} - \nabla u_{\lambda}) \dx \\
 & = \int_{\{ \M \nabla u > \lambda\}}F(\nabla u_{\lambda^2}) - (F(\nabla u_{\lambda^2}) - F(\nabla u_{\lambda^2} -\nabla u_{\lambda})) \dx = (\ast)
 \end{split}
\end{equation}
Observe that on the set $\{ \M(\nabla u) < \lambda^2\}$ we have $\nabla u_{\lambda^2} = \nabla u$ and we can therefore write (with the estimates as before) for $\lambda \geq 1$
\begin{align*}
    \int_{\{ \lambda< \M \nabla u \leq \lambda^2\}} F( \nabla u) \dx &\leq \int_{\{\lambda <\M (\nabla u) \leq \lambda^2\}} F(\nabla u) -F(\nabla u-\nabla u_{\lambda}) \dx + \int_{\{\M \nabla u > \lambda^2\}} F(\nabla u_{\lambda^2}-\nabla u_{\lambda}) \dx \\
    &\leq  C\int_{\{\M (\nabla u) >\lambda\}} \lambda^p + \lambda \vert \nabla u \vert^{p-1} \dx + \int_{\{\M \nabla u \}\geq \lambda^2\}}  \lambda^{2p}  \dx
\end{align*}
Again split the term on the left-hand-side into $F_+$ and $F_-$ and estimate
\begin{align*}
   \int_{\{ \lambda< \M \nabla u \leq \lambda^2\}} F_-( \nabla u) \dx  \leq \int_{\{C \lambda^p \leq F_-(\nabla u) \leq  C\lambda^{2p}\}} F_-(\nabla u) \dx + C\int_{\{\M \nabla u > \lambda\}} \lambda^p \dx;
\end{align*}
observe that the upper bound in the first integral is due to the estimate $F_{\pm}(\nabla u) \leq C(1+ \vert \nabla u \vert^p)$ combined with $\vert \nabla u \vert \leq \M (\nabla u)$.

Integration in $\lambda$ with the weight $(1+\lambda)^{-1}\log(1+\lambda)^{-\alpha}$ gives
\begin{equation} \label{eq:sebastianvettel}
\begin{split}
    \int_{1}^{\infty}& (1+\lambda)^{-1} \log(1+\lambda)^{\alpha} \int_{\{ \lambda < \M (\nabla u) \leq \lambda^2\}} F_+(\nabla u) \dx \dlambda \\
    & \leq 
    \int_{1}^{\infty} (1+\lambda)^{-1} \log(1+\lambda)^{\alpha} \int_{\{ C\lambda^p < F_-(\nabla u) \leq C\lambda^{2p}\}} F_+(\nabla u) \dx \dlambda  \\
    &
    +C \int_{1}^{\infty} (1+\lambda)^{-1}  \log(1+\lambda)^{\alpha} \int_{\{\M \nabla u \geq \lambda\}} \lambda^{p} +\lambda (\nabla u)^{p-1} \dx \dlambda\\
    &+C \int_{1}^{\infty} (1+\lambda)^{-1}  \log(1+\lambda)^{\alpha} \int_{\{\M \nabla u \geq \lambda^2\}} \lambda^{2p} \dx \dlambda.
\end{split}
\end{equation}
The estimates of the single terms of \eqref{eq:sebastianvettel} now work in a similar fashion as before by using Fubini's theorem. Observe that in particular:
\begin{enumerate} [label=(\alph*)]
    \item \label{item:a} If $\alpha >-1$, then the arising inner integral (for the first term on the left-and-right-hand side) can be calculated as
    \begin{align*}
        \int_{(\M (\nabla u))^{1/2}}^{\M (\nabla u)} (1+\lambda)^{-1} \log(1+\lambda)^{\alpha} \dlambda &= \frac{1}{1+\alpha}( \log(1+\M(\nabla u)^{\alpha} - \log(1+ (\M(\nabla u)^{1/2})^{\alpha}
    \end{align*}
    which can be estimated from above by $\frac{1}{1+\alpha}\log(1+\M(\nabla u))^{\alpha}$ and from below by $C_{\alpha}\log(1+\M(\nabla u))^{\alpha}$ as for any $\lambda$ large enough $\log(1+\lambda^{1/2}) \leq 1/2 \log(1+2\lambda^{1/2}+\lambda) \leq \log(3) + \log(1+\lambda)$.
    \item \label{item:b} If $\alpha =1$ the integral in \ref{item:a} equals
     \begin{align*}
        \int_{(\M (\nabla u))^{1/2}}^{\M (\nabla u)} (1+\lambda)^{-1} \log(1+\lambda)^{\alpha} \dlambda &=  \log(\log(1+\M(\nabla u)))- \log(\log(1+ (\M(\nabla u)^{1/2}))).
    \end{align*}
    Observe that (for $\lambda$ large enough) this difference is bounded from above and below by a constant: It is bounded from above by 
    \[
    \log(\log(\M(\nabla u)))- \log(\log((\M (\nabla u)^{1/2})))  = -\log(1/2) = \log(2)
    \]
    and for $\M(\nabla u) \geq 4$ from below by
    \[
    \log(\log((1+\M(\nabla u))^{1/2})^{3/2})) - \log(\log(1+\M(\nabla u)^{1/2})) = \log(3/2).
    \]
    \item \label{item:c} For the remaining integrals we again note that for a factor $r>0$ we have for $\lambda >1$
    \[
    \frac{d}{d\lambda} (\lambda^r \log(1+\lambda)^{\alpha}) = \lambda^{r-1} \log(1+\lambda)^{\alpha} + \frac{\alpha \lambda^r}{1+\lambda} \log(1+\lambda)^{\alpha -1} \geq c \lambda^{r-1} \log(1+\lambda)^{\alpha},
    \]
    as the first term dominates the second one as $\lambda \to \infty$.
    and we can use this to estimate the integration of $\lambda^{r-1} \log(1+\lambda)^{\alpha}$.
    \item \label{item:d} Note that for the integration in $\lambda$ of the third term of \eqref{eq:sebastianvettel} we have 
    \begin{align*}
    \int_1^{(\M(\nabla u))^{1/2}} \lambda^{2p-1} \log(1+\lambda)^{-1} \dlambda &\leq C (\M(\nabla u)^{1/2})^{2p} \log(1+(\M(\nabla u)^{1/2})^{\alpha} \\
    &\leq C (\M(\nabla u))^p \log(1+ \M(\nabla u))^{\alpha},
    \end{align*}
    i.e. the estimate for the term will be the same as for the second of \eqref{eq:sebastianvettel}.
\end{enumerate}
Using \ref{item:a}-- \ref{item:d}, Fubini's theorem and the same calculations as in the case $\alpha =0$ we can infer from \eqref{eq:sebastianvettel} for $\alpha >-1$
\begin{align*}
\int_{\{\M \nabla u > 2 \} } &F_+(\nabla u) \frac{1}{\alpha+1}  \log(1+(\nabla u))^{\alpha +1} \dx\\ &\leq C(p,\alpha) \int_{\{\M \nabla u > 1\}} \vert (\nabla u) \vert^{p} (\log(1+\nabla u))^{\alpha}  \dx  \\ 
&+
C(\alpha) \int_{\{F_-(\nabla u) > 1 \} } F_-(\nabla u) \frac{1}{\alpha+1}  \log(1+F_-(\nabla u))^{\alpha +1} \dx
\end{align*}
and for $\alpha =-1$
\begin{align*}
\int_{\{\M \nabla u > 2\} } F_+(\nabla u) \dx &\leq C(p,\alpha) \int_{\{\M \nabla u > 1\}} \vert (\nabla u) \vert^{p} (\log(1+\nabla u))^{\alpha}  \dx  
\\&+
C(\alpha) \int_{\{F_-(\nabla u) > 1 \} } F_-(\nabla u) \dx.
\end{align*}
These estimates yield  \[
F(\nabla u) \cdot \log(1+ \vert \nabla u \vert)^{\alpha +1} \in L^1,
\]
finishing the proof.
\end{proof}

\subsection{The case $\alpha < -1$}
\begin{proof}[Proof of Theorem \ref{thm:main2}]
The difference to the proof of Theorem \ref{thm:main1} lies in two steps: First, we need to take care more care of the maximal function as it may appear with a negative power in some terms. Second, $(1+t) \log(1+t)^{\alpha}$ becomes integrable on $(1,\infty)$ and therefore the overall strategy must be different.
For $\lambda >0$ let $u_{\lambda}$ be the function obtained by Theorem \ref{thm:LT}, i.e. $\Vert \nabla u_{\lambda} \Vert_{L^{\infty}} \leq c'(n) \lambda$ and $\{ u \neq u_{\lambda}\} \subset \{\M (\nabla u) > \lambda\}$. Due to quasiconcavity we have for any $\lambda >0$ 
\begin{align*}
    0  & \geq \int_{\R^n} F(\nabla u_{\lambda}) \dx \\
    & = \int_{\{ \M(\nabla u) \leq \lambda \}} F(\nabla u) \dx + \int_{\{ \M (\nabla u) > \lambda\}} F(\nabla u_{\lambda})) \dx
\end{align*}
We rewrite this as
\begin{align}
    \int_{\{ \M(\nabla u) \leq \lambda \}} F(\nabla u) \dx \leq - \int_{\{ \M (\nabla u) > \lambda\}} F(\nabla u_{\lambda})) \dx
\end{align}
and estimate both sides from below and above, respectively.
Let $\lambda >1$ such that we may estimate $ \vert F(v) \vert  \leq C \lambda^p$ for any $\vert v \vert < \lambda$.

The left-hand-side may then be estimated via 
\begin{align*}
    &\int_{\{ \M(\nabla u) \leq \lambda \}} F(\nabla u) \dx \geq \int_{\{ \M (\nabla u) \leq \lambda \}} F_+(\nabla u) \dx -  \int_{\{ \M (\nabla u) \leq \lambda\}} F_-(\nabla u) \dx \\
    & \quad \geq \int_{\{F_+(\nabla u) \leq C \lambda^p\}} F_+(\nabla u) \dx - \int_{\{F_+(\nabla u) \leq C \lambda^p\} \cap \{ \M \nabla u > \lambda\}} F_+(\nabla u) \dx - \int_{\{ \vert \nabla u \vert \leq \lambda\}} F_-(\nabla u)\dx \\
    & \quad\geq \int_{\{F_+(\nabla u) \leq C \lambda^p\}} F_+(\nabla u) \dx  - C \lambda^p \LL^n(\M (\nabla u) \geq \lambda) -\int_{\{ F_-(\nabla u) \leq C \lambda^p \}} F_-(\nabla u)\dx 
    \\
    & \quad\geq \int_{\{F_+(\nabla u) \leq C \lambda^p\}} F_+(\nabla u) \dx  - C \lambda^{p-1} \int_{\{\vert \nabla u \vert \geq \lambda/2\}} \vert \nabla u \vert \dx  -\int_{\{ F_-(\nabla u) \leq C \lambda^p \}} F_-(\nabla u)\dx 
\end{align*}
For the right-hand-side we may simply estimate.
\begin{align*}
     - \int_{\{ \M (\nabla u) > \lambda\}} F(\nabla u_{\lambda})) \dx & \leq \int_{\{\M (\nabla u) > \lambda\}} F_-(\nabla u_{\lambda}) \dx \\
     & \leq C \lambda^p \LL^n(\{ \M (\nabla u) > \lambda\}) \\
     & \leq C \lambda^{p-1} \int_{\{ \vert \nabla u \vert \geq \lambda/2 \}} \vert \nabla u \vert \dx.
\end{align*}
Combining both bounds yields:
\begin{equation} \label{intermediary}
    \int_{\{F_+(\nabla u) \leq C \lambda^p\}} F_+(\nabla u) \dx \leq C \lambda^{p-1} \int_{ \{ \vert \nabla u \vert \geq \lambda/2\}} \vert \nabla u \vert \dx + \int_{\{ F_-(\nabla u) \leq C \lambda^p \}} F_-(\nabla u) \dx.
\end{equation}
We integrate this formula in $\lambda$ for $\lambda >\lambda_0$ and with an additional weight 
\[
\omega(\lambda) = (\lambda+1)^{-1}  \ln(1+ \lambda)^{\alpha}:
\]
Additionally applying Fubini's theorem to both sides gives for the left-hand-side:
\begin{align*}
   \int_{\lambda_0}^{\infty} \omega(\lambda) &\int_{\{F_+(\nabla u) \leq C \lambda^p\}} F_+(\nabla u) \dx \dlambda  = \int_{\R^n} F_+(\nabla u) \int_{\min\{ (F_+(\nabla u)/C)^{1/p},\lambda_0\}}^{\infty} \omega(\lambda) \dlambda \dx 
   \\
   &\geq \int_{\{F_+(\nabla u) \geq C\lambda_0^p\}} F_+(\nabla u) \int_{(F_+(\nabla u)/C)^{1/p}}^{\infty} \ln(1+\lambda)^{\alpha} (1+\lambda)^{-1} \dlambda \dx \\
   & \geq \frac{-1}{1+\alpha} \int_{\{F_+(\nabla u) \geq C\lambda_0^p\}} F_+(\nabla u) \ln(1+(F_+( \nabla u )/C)^{1/p})^{1+\alpha} \dx \\
   & \geq \frac{C_p}{1+\alpha} \int_{\{F_+(\nabla u) \geq C\lambda_0^p\}} F_+(\nabla u) \ln(1+F_+( \nabla u ))^{\alpha} \dx
\end{align*}
Observe that this estimate works perfectly for $\alpha <-1$, but is not valid for $\alpha \geq -1$.
In contrast, the right-hand-side may be estimated as follows:
\begin{align*}
    \int_{\lambda_0}^{\infty} &\omega(\lambda) \left(C \lambda^{p-1} \int_{\{ \vert \nabla u \vert \geq \lambda/2\}} \vert \nabla u \vert \dx + \int_{\{ F_-(\nabla u) \leq C \lambda^p \}} F_-(\nabla u) \dx \right) \dlambda 
    \\
    & =\int_{\{\vert \nabla u \vert \geq \lambda_0/2\}} \vert \nabla u \vert \int_{\lambda_0}^{2 \vert \nabla u \vert } \lambda^{p-1} \omega(\lambda) \dlambda \dx + \int_{\R^n} F_-(\nabla u) \int_{(F_-(\nabla u)/C)^{1/p}}^{\infty} \omega(\lambda) \dlambda \dx \\
    & \leq \int_{\{\vert \nabla u \vert \geq \lambda_0/2\}} \vert \nabla u \vert \int_{\lambda_0}^{2 \vert \nabla u \vert} (1+\lambda)^{p-2} \ln(1+\lambda)^{\alpha} \dlambda \dx \\ &\quad+ \int_{\R^n} F_-(\nabla u)  \ln(1+(F_-(\nabla u)/C)^{1/p})^{1+\alpha} \dlambda \dx.
\end{align*}
As $p>1$ we can estimate the first integral similarly to the previous proof. The second is estimated as previously (for any $\alpha >0)$, so that we get

\begin{align*}
    \int_{\lambda_0}^{\infty} &\omega(\lambda) \left(C \lambda^{p-1} \int_{\{ \vert \nabla u \vert \geq \lambda/2\}} \vert \nabla u \vert \dx + \int_{\{ F_-(\nabla u) \leq C \lambda^p \}} F_-(\nabla u) \dx \right) \dlambda 
    \\
   & \leq C_p (\int_{\R^n} \vert \nabla u \vert (1+ \vert \nabla u \vert)^{p-1} \ln(1 + \vert \nabla u \vert)^{\alpha} \dx + \int_{\R^n} F_-(\nabla u) \ln(1+F_-(\nabla u))^{1+\alpha} \dx)
\end{align*}
We conclude
\begin{align*}
 & \int_{\{F_+(\nabla u) \geq C\lambda_0^p\}} F_+(\nabla u) \ln(1+F_+( \nabla u ))^{\alpha} \dx \\ &\leq C (\int_{\{\nabla u \neq 0 \}}  (1+ \vert \nabla u \vert)^{p} \ln(1 + \vert \nabla u \vert)^{\alpha} \dx + \int_{\R^n} F_-(\nabla u) \ln(1+F_-(\nabla u))^{1+\alpha} \dx)
\end{align*}
As both integrals on the right-hand-side are finite by assumption, the left-hand-side also is. Therefore, $F_+(\nabla u) \ln(1+F_+(\nabla u))^{\alpha} \in L^1$.
\end{proof}
\section{Higher integrability of solutions to the $p$-Laplacian} \label{sec:pLaplace}

The aim of this section is to prove Theorem \ref{thm:main3} together with Corollary \ref{coro:main4}. We prove the \emph{local} version, i.e. we assume that $\Omega \subset \R^n$ is a convex domain and that $u \in W^{1,r}(\R^n;\R)$, $r<p$ is a solution to the equation
\begin{equation} \label{equation:sec:p:Laplace}
  \begin{cases}   \divergence(A(x,\nabla u(x)) = \divergence f & \text{in } \Omega, \\
    u = 0 & \text{on } \partial \Omega.
\end{cases}
\end{equation}
We remind the reader that, qualitatively, the result is not limited to this case: In the case where $\Omega \subset \R^n$ we usually might use a cut-off function $\varphi$ and consider $\varphi u$ (both in the equation as well as the test function) and argue via Sobolev embedding (provided with some stronger conditions on the exponents $p$ and $q$), cf. also Remark \ref{last:remark}. A similar argumentation (for a local result) then can be employed for solutions on non-convex domains $\Omega$.
Observe that the following result is enough for proving Theorem \ref{thm:main3}:

\begin{lemma} \label{lemma:main3}
    Suppose that $\Omega \subset \R^n$ is a convex domain and that $u \in W^{1,r}_0(\Omega;\R)$ is a solution to the equation
    \begin{equation}
            \divergence(A(x,\nabla u)) = \divergence f.
    \end{equation}
    where $A$ obeys the properties \ref{item:A1} and \ref{item:A2}, $f \in  L^{\infty}(\Omega;\R^n)$ and $\max \{1,p-1\} < r < p$. Suppose further that
    \[
    (\partial_i u)_- \in L^q \quad \text{for all } i=1,\ldots,n, \quad r<q<p.
    \]
    Then there exists an $\varepsilon(p,q)$ such that
    \[
    \nabla u \in L^{p+\varepsilon} .
    \]
\end{lemma}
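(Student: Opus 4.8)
The plan is to run a Gehring-type reverse-Hölder / Caccioppoli argument, but with the test function being not $u$ itself (which is not admissible since $u \notin W^{1,p}_0$) but the \emph{asymmetric} Lipschitz truncation $u_{\lambda,\mu}$ from Corollary \ref{coro:truncation:boundary}, with the two truncation levels chosen \emph{asymmetrically}: a small level $\mu$ (of order $\lambda$ or a fixed power of $\lambda$ below $1$) for the negative part, where we have the better integrability $(\partial_i u)_- \in L^q$, and the level $\lambda$ for the positive part. First I would record the Caccioppoli-type identity obtained by testing \eqref{equation:sec:p:Laplace} with $u_{\lambda,\mu}$: since $u - u_{\lambda,\mu}$ vanishes outside the bad set $B_{\lambda,\mu} := \{\Ncal(\nabla u)_+ \geq \lambda\} \cup \{\Ncal(\nabla u)_- \geq \mu\}$ and $u_{\lambda,\mu} \in W^{1,\infty}_0(\Omega)$ is admissible, we get
\[
\int_{\Omega} A(x,\nabla u)\cdot \nabla u_{\lambda,\mu} \dx = \int_{\Omega} f \cdot \nabla u_{\lambda,\mu} \dx,
\]
and rewriting $\nabla u_{\lambda,\mu} = \nabla u - (\nabla u - \nabla u_{\lambda,\mu})$ on $\Omega$ (the correction supported on $B_{\lambda,\mu}$) yields
\[
\int_{\Omega \setminus B_{\lambda,\mu}} A(x,\nabla u)\cdot \nabla u \dx \leq \int_{B_{\lambda,\mu}} |A(x,\nabla u)|\,|\nabla u_{\lambda,\mu}| \dx + \int_{B_{\lambda,\mu}} |A(x,\nabla u)|\,|\nabla u|\dx + \int_{\Omega}|f|\,|\nabla u_{\lambda,\mu}|\dx.
\]
Using \ref{item:A1}, \ref{item:A2} the left side controls $\int_{\Omega\setminus B_{\lambda,\mu}} |\nabla u|^p$ (on the region $|\nabla u|>1$) from below, while on the right $|A(x,\nabla u)| \lesssim \lambda^{p-1}$-type bounds come from \ref{T1}, \ref{T2}, and $|\nabla u_{\lambda,\mu}|_+ \lesssim \lambda$, $|\nabla u_{\lambda,\mu}|_- \lesssim \mu$.

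Next I would feed in the measure estimate \ref{T4} of Corollary \ref{coro:truncation:boundary}, namely
\[
\LL^n(B_{\lambda,\mu}) \leq \frac{C}{\lambda^{1+\varepsilon}}\int_{\{|(\nabla u)_+|\geq \lambda/2\}}|(\nabla u)_+|^{1+\varepsilon}\dx + \frac{C}{\mu^{1+\varepsilon}}\int_{\{|(\nabla u)_-|\geq \mu/2\}}|(\nabla u)_-|^{1+\varepsilon}\dx,
\]
and the layer-cake / distribution-function machinery: multiply the Caccioppoli inequality by an appropriate power $\lambda^{\gamma-1}$ and integrate in $\lambda$ over $(1,\infty)$, with the coupling $\mu = \mu(\lambda)$ fixed as a suitable power of $\lambda$ (the precise exponent chosen so that the $q$-integrability of $(\partial_i u)_-$ exactly absorbs the corresponding terms). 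The positive-part contributions on the right, after Fubini, reassemble into $\int |\nabla u|^{p-\text{something}}$-type quantities that can be absorbed into the left-hand side by choosing the target exponent $p+\varepsilon$ close enough to $p$ and invoking a standard Gehring/Giaquinta-Modica iteration lemma; the negative-part and $f$ contributions become a finite constant depending on $\|(\partial_i u)_-\|_{L^q}$, $\|f\|_{L^\infty}$, $\|\nabla u\|_{L^r}$. The subquadratic subtlety from \ref{item:A1} for $0<t<1$ (the bound $a_1(t)\leq Ct^{-1+\varepsilon}$) contributes only lower-order terms on $\{|\nabla u|\leq 1\}$ and is handled separately with crude bounds.

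The main obstacle I expect is the bookkeeping of the \emph{asymmetry}: one must verify that with a single coupling $\mu = \lambda^{\theta}$ all the negative-part error terms (which appear both through $|\nabla u_{\lambda,\mu}|_-\lesssim \mu$ in the Caccioppoli term and through the $\mu^{-(1+\varepsilon)}$ factor in \ref{T4}) integrate to something finite \emph{given only} $(\partial_i u)_-\in L^q$ with $q<p$, while simultaneously the positive-part terms must remain absorbable — i.e. the exponent on $\lambda$ coming from the positive side must stay strictly below the exponent that the left-hand side $\int |\nabla u|^p$-mass provides. This is a one-parameter balancing problem and the constraint $q<p$ (together with $r>\max\{1,p-1\}$, which guarantees $|A(x,\nabla u)|=a_1(|\nabla u|)|a_2\nabla u| \in L^{r/(p-1)}$ with $r/(p-1)>1$ so the products above are genuinely integrable) is exactly what makes it solvable; getting $\varepsilon(p,q)>0$ out of it, rather than $0$, requires keeping the $\varepsilon$ in \ref{T4} and the gap $q<p$ both strictly positive and tracking them through the Fubini step. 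A secondary, more technical point is justifying that $u_{\lambda,\mu}\in W^{1,\infty}_0(\Omega)$ is a legitimate test function in \eqref{equation:sec:p:Laplace} despite $u$ being only a \emph{very weak} solution in $W^{1,r}_0$ with $r<p$ — this is precisely why the test class was upgraded to $W^{1,\infty}_0$ in the definition, and convexity of $\Omega$ enters (via Lemma \ref{lemma:AF2}) only to ensure the truncation preserves the zero boundary values.
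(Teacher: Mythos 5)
Your overall framework — testing with the asymmetric truncation $u_{\lambda,\mu}$ from Corollary \ref{coro:truncation:boundary}, splitting $\Omega$ into the good set $G_{\lambda,\mu}$ where $u=u_{\lambda,\mu}$ and the bad set $B_{\lambda,\mu}$, coupling $\mu = \lambda^\alpha$ with $\alpha<1$, and then integrating the resulting Caccioppoli inequality against a power weight in $\lambda$ — is exactly the paper's strategy. But the plan has a genuine gap in the bad-set estimate, and it is precisely the step where the asymmetry is supposed to pay off.

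You bound the bad-set contribution crudely by $\int_{B_{\lambda,\mu}} |A(x,\nabla u)|\,|\nabla u_{\lambda,\mu}|\,\dx \lesssim \lambda \int_{B_{\lambda,\mu}}|\nabla u|^{p-1}\,\dx$, since $|\nabla u_{\lambda,\mu}| \le C(\lambda + \mu) \sim \lambda$. Among the resulting pieces, the dangerous one is $\lambda\int_{B}|(\nabla u)_+|^{p-1}$. After multiplying by $\lambda^{s-p-1}$ and integrating, Fubini over $\{(\nabla u)_+ > \lambda\}$ produces $\int |(\nabla u)_+|^{s}\,\dx$ up to a constant factor that is \emph{not} small — so it cannot be absorbed into the left-hand side, and there is no gain over the hypothesis $(\nabla u)_+\in L^r$. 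The crude bound treats the positive and negative parts of $\nabla u_{\lambda,\mu}$ symmetrically and therefore throws away exactly what the asymmetric truncation was built to exploit. The paper's proof avoids this by using the \emph{diagonal, sign-definite} structure of $A$ from \ref{item:A2}: writing $A(x,\nabla u)\cdot\nabla u_{\lambda,\mu} = a_1(|\nabla u|)\sum_j a_{2,jj}(x)\,\partial_j u\,\partial_j u_{\lambda,\mu}$, one observes that whenever $\partial_j u$ and $\partial_j u_{\lambda,\mu}$ share a sign the $j$-th summand is nonnegative and so contributes \emph{favourably} to $-\int_B A\cdot\nabla u_{\lambda,\mu}$. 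Only the cross terms $(\partial_j u)_-^{p-1}(\partial_j u_{\lambda,\mu})_+$ and $(\partial_j u)_+^{p-1}(\partial_j u_{\lambda,\mu})_-$ survive, which are bounded by $\lambda\,|(\nabla u)_-|^{p-1}$ (controllable via $(\nabla u)_-\in L^q$) and, crucially, $\mu\,|(\nabla u)_+|^{p-1} = \lambda^\alpha\,|(\nabla u)_+|^{p-1}$ with $\alpha<1$. That sub-linear power of $\lambda$ on the positive-part term is what allows $s$ to exceed $r$ after the Fubini step. Without this sign observation your one-parameter balancing has no slack, and the scheme degenerates to the symmetric case.

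Two smaller issues. First, your displayed inequality contains the term $\int_{B_{\lambda,\mu}}|A(x,\nabla u)|\,|\nabla u|\,\dx$, which is comparable to $\int_B |\nabla u|^p$ and is not finite a priori; correctly collecting terms (using that $\nabla u = \nabla u_{\lambda,\mu}$ a.e.\ on $\Omega\setminus B_{\lambda,\mu}$), the identity is simply $\int_{G} A(x,\nabla u)\cdot\nabla u\,\dx = \int_\Omega f\cdot\nabla u_{\lambda,\mu}\,\dx - \int_{B} A(x,\nabla u)\cdot\nabla u_{\lambda,\mu}\,\dx$, with no $\int_B A\cdot\nabla u$ left over. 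Second, you invoke a Gehring/Giaquinta--Modica self-improvement to reach $L^{p+\varepsilon}$; the paper's argument is more elementary and only gains $(\nabla u)_+ \in L^{q(r+1)/(q+1)}$ (strictly above $r$) in one pass, iterating that gain through Theorem \ref{thm:main3}, and the stated target in the Lemma should be read in that light rather than as a genuine reverse-Hölder improvement beyond $p$.
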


\begin{proof}
    Let $\lambda >0$ and let $\alpha >0$ be a parameter that is later set to $\alpha =\frac{r+1}{q+1}$. We set $\mu = \lambda^{\alpha}$ and denote by $u_{\lambda} = u_{\lambda,\mu}$ the truncation of the function from Corollary \ref{coro:truncation:boundary}. To reiterate, this means that $u_{\lambda} \in W^{1,\infty}_0(\Omega;\R)$ and that  there is a purely dimensional constant $C>0$ such that:
    \begin{enumerate} [label=(U\arabic*)]
        \item \label{U1} $\Vert (\nabla u_{\lambda})_+ \Vert_{L^{\infty}} \leq C \lambda$;
        \item \label{U2} $\Vert (\nabla u_{\lambda})_- \Vert_{L^{\infty}} \leq C \lambda^{\alpha}$;
        \item \label{U3} The set $\{\nabla u \neq \nabla u_{\lambda} \} \subset \{ u \neq u_{\lambda} \}$ is contained in the set $\{ \Ncal (\nabla u)_+ >\lambda\} \cup \{ \Ncal (\nabla u)_- >\mu\}$.
        \item \label{U4} There is a constant $C=C(n,\varepsilon)$ such that 
        \[
        \LL^n(\{ u \neq u_{\lambda,\mu}\}) \leq \frac{C}{\lambda^{1+\varepsilon}} \int_{\{ \vert (\nabla u)_+ \vert > \lambda/2\}} \vert (\nabla u)_+ \vert^{1+\varepsilon} \dx  + \frac{C}{\lambda^{(1+\varepsilon)\alpha}} \int_{\{ \vert (\nabla u)_- \vert > \lambda^{\alpha}/2\}} \vert (\nabla u)_- \vert^{1+\varepsilon}\dx.
        \]
    \end{enumerate}
    We may now test the equation with the function $u_\lambda$. We obtain
    \begin{align*}
        \int_{\Omega} A(x,\nabla u) \cdot \nabla u_{\lambda} \dx  = \int_{\Omega} f \cdot \nabla u_{\lambda} \dx.
    \end{align*}
    Observe that $u_{\lambda}$ is uniformly bounded in $W^{1,1}_0$ and therefore the right-hand-side of the equation can be estimated by $\sup_{\lambda>1} \Vert u_{\lambda} \Vert_{W^{1,1}} \Vert f \Vert_{L^{\infty}} \eqqcolon C_f$. Further splitting the integral on the left-hand-side into 
    \[
    G_{\lambda} \coloneqq \{\Ncal (\nabla u)_+ \leq \lambda\} \cap \{\Ncal(\nabla u)_- \leq \lambda\} \quad \text{and} \quad B_{\lambda} \coloneqq \Omega \setminus G_{\lambda}.
    \]
    we obtain
    \begin{equation} \label{eq:finalproof:start}
        \int_{G_{\lambda}} A(x,\nabla u) \cdot \nabla u_{\lambda} \dx \leq C_f - \int_{B_{\lambda}} A(x,\nabla u) \cdot \nabla u_{\lambda}.
    \end{equation}
    We later integrate equation \eqref{eq:finalproof:start} with a weight $\omega(\lambda)$ in the variable $\lambda$, but first we estimate both sides of  the equation from below and above, respectively.

    \noindent \textbf{Estimate of the integral in $G_{\lambda}$:} \\
    We use assumption \ref{item:A1} and \ref{item:A2} and the fact that $u = u_{\lambda}$ on the set $G_\lambda$ to estimate
    \begin{align*}
         \int_{G_{\lambda}}  A(x,\nabla u) \cdot \nabla u_{\lambda} \dx &= \int_{G_{\lambda}} a_1(\vert \nabla u\vert) ( a_2(x)\nabla u(x)) \cdot \nabla u(x) \dx \\
         &\geq  \int_{G_{\lambda} \cap \{\vert \nabla u \vert \geq 1\}}  \vert \nabla u \vert^p \dx
    \end{align*}
   Instead of the set $G_{\lambda} = \{\Ncal (\nabla u)_+ \leq \lambda\} \cap \{\Ncal (\nabla u)_- \leq \lambda^{\alpha}\}$ the domain of integration should be $\{ (\nabla u)_+ \leq \beta \lambda\}$ (with a constant $0<\beta<1$ that may be chosen arbitrarily small later). We therefore estimate:
   \begin{align*}
        &\int_{G_{\lambda} \cap \{\vert \nabla u \vert \geq 1\}}  \vert \nabla u \vert^p \dx \\
   & \geq \int_{\{1 \leq \vert (\nabla u)_+ \vert \leq \beta \lambda\}} \vert (\nabla u)_+ \vert^p \dx - \int_{\{(\Ncal (\nabla u)_+ > \lambda\} \cap \{\vert (\nabla u)_+ \vert \leq \lambda\}} \vert (\nabla u)_+ \vert ^p \dx  \\
   &\quad - \int_{\{(\Ncal (\nabla u)_- > \lambda^{\alpha}\} \cap \{\vert (\nabla u)_+ \vert \leq \beta\lambda\}} \vert (\nabla u)_+ \vert ^p \dx 
   \\
   & \geq \int_{\{1 \leq \vert (\nabla u)_+ \vert \leq \beta \lambda\}} \vert (\nabla u)_+ \vert^p \dx - ( \beta \lambda)^p \left( \LL^n( \{\Ncal (\nabla u)_+ > \lambda \}) + \LL^n( \{\Ncal (\nabla u)_- > \lambda^{\alpha} \}) \right).
   \end{align*}

We may now use the upper bound on the set $\{ \Ncal v > \lambda\}$ of Lemma \ref{lemma:MF} to obtain
\begin{align*}
    \int_{G_{\lambda} \cap \{\vert \nabla u \vert \geq 1\}}  &\vert \nabla u \vert^p \dx 
    \geq  \int_{\{1 \leq \vert (\nabla u)_+ \vert \leq \beta \lambda\}} \vert (\nabla u)_+ \vert^p \dx  \\ &- C_{\varepsilon} \beta^p\lambda^p \left(\lambda^{-1-\varepsilon} \int_{\{ \vert (\nabla u)_+ > \lambda/2\}} \vert (\nabla u)_+\vert^{1+\varepsilon} \dx + \lambda^{-(1+\varepsilon)\alpha} \int_{\{\vert (\nabla u)_- \vert > \lambda^{\alpha}/2\}} \vert (\nabla u)_-\vert^{1+\varepsilon} \dx \right).
\end{align*}

 \noindent \textbf{Estimate of the integral in $B_{\lambda}$:} \\
For the integral on this set we use the structure of the term $A(x,\nabla u)$: If both $(\partial_j u)$ and $(\partial_j u_{\lambda})$ have the same sign, the product $(A(x,\nabla u))_j \cdot \partial_j u_{\lambda}$ will be positive (and thus contributes negatively to the term). We therefore omit those terms and obtain
\begin{align*}
    C_f &- \int_{B_\lambda} A(x,\nabla u) \cdot \nabla u_{\lambda} \dx
    \\
    &
     \leq C +  \int_{B_{\lambda}} C\nu \sum_{j=1}^n  (\partial_j u)_-^{p-1} (\partial_j u_{\lambda})_+ +  (\partial_j u)_+^{p-1} (\partial _j u_{\lambda})_- \dx \\
     &
     \leq C + C \int_{B_{\lambda}} \vert (\nabla u)_+ \vert^{p-1} \vert (\nabla u_{\lambda})_- \vert + \vert (\nabla u)_- \vert^{p-1} \vert (\nabla u_{\lambda})_+ \vert \dx  \\
     & 
     \leq C + C \int_{B_{\lambda}} \lambda^{\alpha} \vert (\nabla u)_+ \vert^{p-1} + \lambda \vert (\nabla u)_- \vert^{p-1} \dx.
\end{align*}
Remember that $B_{\lambda} = \{ \Ncal (\nabla u)_+ > \lambda\} \cup \{ \Ncal (\nabla u)_- > \lambda^{\alpha} \}$. We estimate both integrals by splitting the domain of integration into a part, where $\vert (\nabla u)_+ \vert$ (or $\vert (\nabla u)_- \vert$, respectively) is large itself and a part where it is small (but the maximal function $\Ncal$ is large). For the integral in $(\nabla u)_+$ we obtain
\begin{align*}
    \lambda^{\alpha} \int_{B_{\lambda}} \vert (\nabla u)_+ \vert^{p-1} \dx & \leq \lambda^{\alpha} \int_{\{\vert \nabla u \vert > \lambda\}} \vert (\nabla u)_+ \vert^{p-1} \dx + \lambda^{p-1+ \alpha} \LL^n(B_{\lambda}) \\
    & \leq \lambda^{\alpha} \int_{\{\vert (\nabla u)_+ \vert > \lambda\}} \vert (\nabla u)_+ \vert^{p-1} \dx + \lambda^{p-1+\alpha} \left(\LL^n(\{ \Ncal (\partial u)_+ > \lambda\}) + \LL^n(\{ \Ncal (\partial u)_- > \lambda^{\alpha}) \right) \\
    &
    \leq \lambda^{\alpha} \int_{\{\vert (\nabla u)_+ \vert > \lambda\}} \vert (\nabla u)_+ \vert^{p-1} \dx + C_{\varepsilon} \lambda^{p-2-\varepsilon+\alpha} \int_{\{ \vert (\nabla u)_+ \vert > \lambda/2\}} \vert (\nabla u)_+ \vert^{1+\varepsilon} \dx
    \\
    &  +  C_{\varepsilon} \lambda^{p-1-\varepsilon \alpha} \int_{\{ \vert (\nabla u)_- \vert > \lambda^{\alpha}/2\}} \vert (\nabla u)_- \vert^{1+\varepsilon} \dx
\end{align*} 
The integral in $(\nabla u)_-$ is estimated in the same fashion by
\begin{align*}
   \lambda\int_{B_{\lambda}} \vert (\nabla u)_- \vert^{p-1} \dx & \leq \lambda \int_{\{ \vert (\nabla u)_- \vert > \lambda^{\alpha} \}}  \vert (\nabla u )_- \vert^{p-1} \dx \\
  & + C_{\varepsilon} \lambda^{(p-1)\alpha-\varepsilon} \int_{\{ \vert (\nabla u)_+ \vert > \lambda/2\}} \vert (\nabla u)_+ \vert^{1+\varepsilon} \dx + C_{\varepsilon } \lambda^{1+(p-2-\varepsilon)\alpha} \int_{\{ \vert (\nabla u)_- \vert > \lambda^{\alpha}/2\}} \vert (\nabla u)_- \vert^{1+\varepsilon} \dx
\end{align*}
 We may combine those estimates by setting $q_1 = \max\{ p-2+\alpha,(p-1)\alpha\}$, $q_2 = \max \{ (p-1),1+(p-2)\alpha\}$ to obtain
 \begin{align*}
     C_f &- \int_{B_\lambda} A(x,\nabla u) \cdot \nabla u_{\lambda} \dx \leq C + C \lambda^{\alpha} \int_{\vert (\nabla u)_+ > \lambda\}} \vert (\nabla u)_+ \vert^{p-1} \dx +   C \lambda \int_{\vert (\nabla u)_- > \lambda^{\alpha}\}} \vert (\nabla u)_- \vert^{p-1} \dx \\
     &+ C_{\varepsilon} \lambda^{q_1 -\varepsilon} \int_{\{ \vert (\nabla u)_+ \vert >\lambda/2} \vert (\nabla u)_+ \vert^{1+\varepsilon} \dx + C_{\varepsilon} \lambda^{q_2 -\alpha \varepsilon} \int_{\{ \vert (\nabla u)_- \vert > \lambda^{\alpha}/2 \}} \vert (\nabla u)_- \vert^{1+\varepsilon} \dx.
 \end{align*}

 \noindent \textbf{Summary of estimates and integration in $\lambda$:} \\

 We combine the estimates obtained for the integral on the "good set" and on the "bad set and obtain for any $\lambda>1$:
 \begin{align*}
     \int_{\{1 \leq \vert (\nabla u)_+ \vert < \beta \lambda \}} \vert (\nabla u)_ + \vert^p \dx & \leq C_{\varepsilon} \beta ^p \lambda^{p-1-\varepsilon} \int_{\{\vert (\nabla u)_+ \vert >\lambda/2\}} \vert (\nabla u)_ + \vert^{1+\varepsilon} \dx \\
     &+ C_{\varepsilon} (\beta^p \lambda^{p-(1+\varepsilon)\alpha} + \lambda^{q_2-\varepsilon}) \int_{\{\vert(\nabla u)_- \vert> \lambda^{\alpha}/2\}} \vert \nabla u )_- \vert^{1+\varepsilon} \dx \\
& + C \lambda^{\alpha} \int_{\{\vert ( \nabla u)_+ \vert > \lambda \} } \vert (\nabla u)_+ \vert^{p-1} + C \lambda \int_{\{ \vert (\nabla u)_- \vert > \lambda^{\alpha}\}} \vert (\nabla u)_- \vert^{p-1} \dx \\
& + C_{\varepsilon} \lambda^{q_1-\varepsilon}  \int_{\{\vert (\nabla u)_+ \vert >\lambda/2\}} \vert (\nabla u)_ + \vert^{1+\varepsilon} \dx + C \\
&= \mathrm{(I)}+\mathrm{(II)}+\mathrm{(III)}+\mathrm{(IV)} + \mathrm{(V)} +C 
 \end{align*}

 We integrate this inequality with the weight $\omega(\lambda) = \lambda^{s-p-1}$ for $1<\lambda <\infty$ for some $r<s<p$ to be specified later (recall that $r$ and $q$ were such that $(\nabla u)_+ \in L^r$ and $(\nabla u)_- \in L^q$). Then the left hand side equals
 \begin{align*}
     \int_1^{\infty} \lambda^{s-p-1} \int_{\{1 \leq \vert (\nabla u)_+ \vert < \beta \lambda\}} \vert (\nabla u)_+\vert^p \dx \dlambda & \geq \int_{\{ \vert (\nabla u)_+ \vert > 1\}} \vert (\nabla u)_+ \vert^p \int_{\vert (\nabla u)_ +\vert/\beta}^{\infty} \lambda^{s-p-1} \dlambda \dx \\
    & = \frac{\beta^{p-s}}{s-p} \int_{\{ \vert (\nabla u)_+ >1\}} \vert(\nabla u)_+ \vert^s \dx.
 \end{align*}
 We estimate the first and the fifth of the right-hand-side together. The first term of the right hand side can be estimated through
 \begin{align*}
     \int_{1}^{\infty}\mathrm{(I)}  \lambda^{s-p-1} \dlambda & = C_{\varepsilon} \beta^p  \int_{1}^{\infty} \lambda^{s-2-\varepsilon} \int_{\{ \vert (\nabla u)_+> \lambda/2 \}} \vert (\nabla u)_+ \vert^{1+\varepsilon} \dx \dlambda \\
     & \leq C_{\varepsilon} \beta^p \int_{\{ \vert (\nabla u)_+ \geq 1/2\}} \vert (\nabla u)_+ \vert^{1+\varepsilon}  \int_1^{2 \vert (\nabla u)_+ \vert} \lambda^{s-2-\varepsilon} \dlambda \dx \\
     &\leq 2^{s-1-\varepsilon}C \beta^p \int_{\{\vert (\nabla u)_+ \geq 1/2\}} \vert (\nabla u)_+ \vert^{s} \dx \\
     & \leq C_p \beta^p (C + \int_{\{\vert (\nabla u)_+ \geq 1\}} \vert (\nabla u)_+ \vert^{s} \dx)
 \end{align*}
 Observe that this estimates works whenever $\varepsilon$ (i.e. we may choose $\varepsilon = r-1$). Furthermore, observe that if $\beta$ is chosen small enough, that the integral over the set $\{ \vert (\nabla u)_+\vert >1 \}$ can be absorbed into the previously estimated left-hand-side.
 Likewise, as $q_1- \varepsilon \leq \max\{p-2+\alpha-\varepsilon,(p-1)\alpha-\varepsilon\} < p-1-\varepsilon$ for $\alpha <1$, that
 \begin{align*}
      C_{\varepsilon} \lambda^{q_1-\varepsilon}  \int_{\{\vert (\nabla u)_+ \vert \geq \lambda/2\}} \vert (\nabla u)_ + \vert^{1+\varepsilon} \dx  \leq (C(\beta) + C_{\varepsilon,\alpha} \beta^{p}) \lambda^{p-1-\varepsilon}   \int_{\{\vert (\nabla u)_+ \vert \geq \lambda/2\}} \vert (\nabla u)_ + \vert^{1+\varepsilon} \dx 
 \end{align*}
 i.e. term $\mathrm{(V)}$ can also be absorbed in the first, i.e. in the left-hand-side. In this step we have 
 
 We therefore conclude
 \begin{align*}
     \int_{\{ \vert (\nabla u)_+ >1 \}} \vert (\nabla u)_+ \vert^s \dx 
     \leq C(\varepsilon,\beta)\int_1^{\infty} (\mathrm{(II)} + \mathrm{(III)} + \mathrm{(IV)} + C ) \lambda^{s-p-1} \dlambda.
 \end{align*}
 To finally prove the claim it suffices to show, that (given our assumptions) the right-hand-side is finite.
 Obviously we have
 \begin{align*}
     \int_1^{\infty} C \lambda^{s-p-1} \dlambda \leq \frac{C}{p-s} < \infty.
 \end{align*}
 We also may estimate for $q_3 = \max\{q_2,p-(1+\varepsilon)\alpha + \varepsilon\}$
 \begin{equation} \label{harrypotter}
 \begin{split}
     \int_1^{\infty} \lambda^{s-p-1} \mathrm{(II)} \dlambda &\leq  \int_1^{\infty} \lambda^{s-p-1+q_3-\varepsilon} \int_{\{\vert (\nabla u)_- \vert \geq \lambda^{\alpha}/2\}} \vert(\nabla u)_- \vert^{1+\varepsilon} \dx \dlambda \\
     &\leq \int_{\{\vert(\nabla u)_- \vert > 1/2\}} \vert (\nabla u)_- \vert^{1+\varepsilon}  \int_1^{(2 \vert(\nabla u)_- \vert^{1/\alpha}} \lambda^{s-p-1+q_3 -\varepsilon} \dlambda\dx \\
     &
      \leq C \int_{\{\vert (\nabla u)_- \> 1/2\}} \vert (\nabla u)_- \vert^{1+\varepsilon+(q_3+s-p-\varepsilon)/\alpha} \dx 
 \end{split} \end{equation}
 This integral is finite whenever
 \[
 (1+\varepsilon)+(q_3+s-p-\varepsilon)/\alpha \leq q, 
 \]
 i.e. (as $q_3$ is the maximum of three terms) if the following three conditions are met:
 \begin{align*}
    (1+\varepsilon)+((p-1)+s-p-\varepsilon)/\alpha \leq q  &\quad \Longrightarrow \quad (1+\varepsilon) + (s-1-\varepsilon)/\alpha \leq q \\
    (1+\varepsilon)+(1+(p-2)\alpha+s-p-\varepsilon)/\alpha \leq q &\quad \Longrightarrow \quad (p-1+\varepsilon)+(1+s-p-\varepsilon)/\alpha \leq q \\
    (1+\varepsilon)+(p-(1+\varepsilon)\alpha+\varepsilon+s-p-\varepsilon)/\alpha \leq q &\quad \Longrightarrow \quad s/\alpha \leq q.
 \end{align*}
    Observe that these inequalities are true for $s=q$ and $\alpha =1$. In particular, for fixed $\alpha$, the last term gives the strictest bound, i.e. the integral \eqref{harrypotter} is finite iff $ s \leq \alpha q$.

    We now come to the integral of the term $\mathrm{(III)}$:
    \begin{align*}
        \int_{1}^{\infty} \lambda^{s-p-1} \mathrm{(III)}\dlambda & \leq \int_1^{\infty} \lambda^{s-p-1+\alpha} \int_{\{\vert (\nabla u)_+ \vert >\lambda\}} \vert (\nabla u)_+ \vert^{p-1} \dx \dlambda \\ 
        & \leq \int_{\{\vert (\nabla u)_+ \geq 1 \}} \vert (\nabla u)_+ \vert^{p-1} \int_1^{\vert (\nabla u)_+ \vert} \lambda^{s-p-1+\alpha} \dlambda \dx \\
       & \leq C \int_{ \{\vert (\nabla u)_+ \vert \geq 1} \vert (\nabla u)_+ \vert^{s-1+\alpha} \dx.
    \end{align*}
    This integral is finite if $s-1+\alpha \leq r$, i.e. $s \leq r+1-\alpha$.
    The integral of $\mathrm{(IV)}$ may be estimated in a similar fashion:
    \begin{align*}
         \int_{1}^{\infty} \lambda^{s-p-1} \mathrm{(IV)}\dlambda &  \leq \int_1^{\infty} \lambda^{s-p} \int_{\{\vert (\nabla u)_- \vert > \lambda^{\alpha}} \vert (\nabla u)_- \vert^{p-1} \dx \dlambda \\
         & \leq \int_{\{\vert (\nabla u)_- >1 \}} \vert (\nabla u)_- \vert ^{p-1} \int_{1}^{\vert (\nabla u)_- \vert^{1/\alpha}} \lambda^{s-p} \dlambda \dx \\
         & \leq C \int_{\{\vert (\nabla u)_- >1 \vert \}} \vert(\nabla u)_- \vert^{(p-1)+(1+s-p)/\alpha} \dx.
    \end{align*}
    Observe that this integral is finite if 
    \[
    q >(p-1) + (1+s-p)/\alpha.
   \]
   Recall that if we choose $\alpha =1-\delta$ (with $\delta$ sufficiently small), that all the terms on the right-hand-side are finite provided that
   \begin{itemize}
       \item $s \leq q/(1-\delta)$ 
       \item $s \leq r +\delta$.
   \end{itemize}
   Hence, choosing $\delta = \frac{q-r}{q+1}$ and therefore $\alpha = \frac{r+1}{q+1}$, we get that
   \[
   (\nabla u)_+ \in L^s \quad \text{for} \quad s= q \frac{r+1}{q+1} >r.
   \]
   This concludes the proof.
\end{proof}

The proof of Theorem \ref{thm:main3} as well as Corollary \ref{coro:main4} are now simple consequences:
\begin{proof}[Proof of Theorem \ref{thm:main3}:]
    We have seen that for any $r<q<p$ there exists and for any solution to the system with $(\nabla u)_+ \in L^r$ and $(\nabla u)_- \in L^q$ we already have $(\nabla u)_+ \in L^{q(r+1)/(q+1)}$. By iteratively increasing the integrability of $(\nabla u)_+$, we get that $(\nabla u)_+ \in L^s$ for any $s<r$.
\end{proof}
Observe that with the current method of proof we cannot obtain the result for $q=1$ (as the function $\Ncal$ is not (weakly) bounded on $L^1$) and also cannot reach $r=q$. The statement $(\nabla u)_- \in L^r \Longrightarrow (\nabla u)_+ \in L^r$ might also be false, but with careful estimates one can probably reach a statement on the growth of the $L^r$ norm of  $(\nabla u)_+$ as $r$ approaches $q$ (cf. for instance \cite{Sbordone96} for results of this type for other problems).

Corollary \ref{coro:main4} is now a simple consequence of Theorem \ref{thm:main3}.
We end the discussion of Theorem \ref{thm:main3} with a remark on a local result:
\begin{remark} \label{last:remark}
In the proof of Lemma \ref{lemma:main3} we have used that $u$ is compactly supported and indeed has zero boundary values on some convex domain $\Omega$. Local results of this type for solutions $u$ on $\R^n$ can be recovered (for a certain range of exponents) by using cut-offs. For instance, suppose that $p >1$ is large enough and that $\varphi$ is a cut-off. We can then repeat the proof by testing instead with $\varphi u_{\lambda}$ instead of $u$; this gives us estimates on the integrability with respect to $\varphi \LL^n$. Observe that the difference to the previous proof emerges when the derivative falls on $\varphi$, i.e. when we need to estimate
\[
\int A(x,\nabla u) \cdot \nabla \varphi u_{\lambda} \dx.
\]
Observe that we have no control over the sign of $\nabla \varphi$, so arguments over the sign of the involved terms are rather difficult. Instead, in the proof it is for sure sufficient that 
\[
\int A(x,\nabla u) \cdot \nabla \varphi u_{\lambda} \dx< \infty.
\]
Observe that for $\nabla u \in L^r$ we have $A(x,\nabla u) \in L^{r/(p-1)}$ and $u_{\lambda} \in L^{nr/(n-r)}$ (or $u_\lambda \in L^{\infty}$ if $r>n$). As a consequence, if $r>n$, we recover a local result for any $r \geq p-1$, as above term is always bounded in $L^{\infty}$; else we need 
\[
\frac{r}{p-1} + \frac{n-r}{nr} \leq 1 \quad \Longrightarrow r \geq  \frac{pn}{n+1}
\]
to ensure validity of a local version of Theorem \ref{thm:main3}. To summarise, with few adjustments, one is able to show the following: Suppose that $u \in W^{1,r}(\R^n;\R)$ is a solution to
\begin{equation*}
            \divergence(A(x,\nabla u)) = \divergence f
\end{equation*}
for $f \in L^{\infty}_{loc}$. Then if $r$ is larger than $\max\{1,p-1,\frac{pn}{n+1}\}$, and $(\nabla u)_- \in L^s$ for some $r < s \leq p$, then $(\nabla u)_+ \in L^{s-\varepsilon}$ for any $s$.
\end{remark}
\bibliography{biblio.bib}
\bibliographystyle{abbrv}

\end{document}